\newtheorem{theorem}{Theorem}[section]
\newtheorem{notation}[theorem]{Notation}
\newtheorem{proposition}[theorem]{Proposition}
\newtheorem{lemma}[theorem]{Lemma}
\newtheorem{remark}[theorem]{Remark}
\newtheorem{example}[theorem]{Example}
\theoremstyle{definition}
\newtheorem{definition}[theorem]{Definition}
\numberwithin{equation}{section}
\begin{document}

\title[Infinite-dimensional reduced Heisenberg]{Logarithmic Sobolev inequalities on infinite-dimensional reduced Heisenberg groups}

\author[Maria Gordina]{Maria Gordina{$^{\dag}$}}
\thanks{\footnotemark {$\dag$} Research was supported in part by NSF Grant DMS-2246549.}
\address{Department of Mathematics\\
University of Connecticut\\
Storrs, CT 06269,  U.S.A.}
\email{maria.gordina@uconn.edu}

\author[Liangbing Luo]{Liangbing Luo}
\address{
Department of Mathematics and Statistics\\
Queen's University\\
Kingston, ON, Canada
K7L 3N6}
\email{liangbing.luo@queensu.ca}

\maketitle

\begin{abstract}
We construct a family of infinite-dimensional reduced Heisenberg groups which can be viewed as infinite-dimensional homogeneous spaces. Such a space is an analogue of finite-dimensional reduced Heisenberg groups in infinite dimensions. We study properties of the hypoelliptic heat kernel measure on this space, including hypoelliptic logarithmic Sobolev inequalities there.
\end{abstract}

\tableofcontents

\section{Introduction}

The logarithmic Sobolev inequality has been first introduced and studied by L.~Gross in \cite{Gross1975c} on a Euclidean space with respect to the Gaussian measure. This inequality is fundamental to analysis on infinite-dimensional spaces, as the logarithmic Sobolev constant (LSI constant) has been proven to be dimension-free thus allowing it to be used in infinite dimensions. Ever since then, it found many applications in both finite and infinite dimensional settings. 

In the setting of finite-dimensional Riemannian manifolds, the work by Bakry-\'{E}mery-Ledoux et al shows that the logarithmic Sobolev constant for the heat kernel measure only depends on the Ricci lower bound while it is independent of the dimension. More generally, one can prove such inequalities for certain class of uniformly elliptic operators. However, in infinite-dimensional settings such an approach is not easily available as many of these techniques are inherently finite-dimensional.  While some functional inequalities can be proven on direct and inductive limits spaces as in \cite{DriverGordina2009}, giving a unified approach to proving logarithmic Sobolev inequalities on non-linear infinite-dimensional spaces seems to be elusive. In addition to lacking PDEs' techniques, one has to choose a reference measure in the absence of the Lebesgue measure, and address subtle topological issues.  There have been a number of results on some such spaces including path spaces, e.~g. \cite{Gross1975c, CarlenStroock1986, Zegarlinski1990, Gross1991, Gross1992, DriverLohrenz1996, Hsu1997, DriverGordina2008, Melcher2009, GordinaLuo2022}.

In the present paper, we construct a family of infinite-dimensional \emph{reduced} Heisenberg groups modelled on an abstract Wiener space. This group can be seen as an infinite-dimensional analogue of the classical finite-dimensional reduced Heisenberg considered in \cite[Section 3]{Thangavelu1991}. The construction of such a group is closely related to the infinite-dimensional Heisenberg group first introduced in \cite{DriverGordina2008}. As there is no natural Haar measure in such a setting, our analysis is done with respect to the hypoelliptic heat kernel measure.

More precisely, the Lie algebra of the infinite-dimensional reduced Heisenberg group satisfies an a Lie bracket-generating (H\"{o}rmander's) condition. This induces a \emph{sub-Laplacian} on such a space, which can be seen as an analogue of the sub-Laplacian operator in finite-dimensional sub-Riemannian geometry. While hypoellipticity in infinite dimensions cannot be defined using the original H\"{o}rmander's approach in \cite{Hormander1967a}, this is another infinite-dimensional setting where the corresponding differential operator should be viewed as hypoelliptic. Our main goal is  to study the logarithmic Sobolev inequality with respect to the hypoelliptic heat kernel measure corresponding to this operator.

In this paper, our approach to study logarithmic Sobolev inequalities is quite different from those in the existing mathematical literature on  infinite-dimensional spaces, such as finite-dimensional approximations and tensorization in \cite{DriverGordina2008, Melcher2009, GordinaLuo2022}. Our method relies on a group action on the underlying space inducing a map between Dirichlet spaces. This is the first time that such a method has been applied in infinite-dimensional settings to study the logarithmic Sobolev inequality.

The motivation for our approach comes from L.~Gross' work in \cite{Gross1992} for finite-dimensional Riemannian manifolds, where (elliptic) logarithmic Sobolev inequalities on quotient spaces have been studied. Later, this approach in the finite-dimensional setting has been explored more for the heat kernel analysis on homogeneous spaces. For example, \cite{DriverGrossSaloff-Coste2010} used it to prove that the Taylor map on complex manifolds is unitary, and \cite{GordinaLuo2024} first applied it to the sub-Riemannian setting to show  logarithmic Sobolev constants are dimension-independent on homogeneous spaces. While the Taylor map on infinite-dimensional Heisenberg groups have been studied in \cite{DriverGordina2010, GordinaMelcher2013}, logarithmic Sobolev inequalities for hypoelliptic heat kernel measures have not been considered there.  We start by showing that logarithmic Sobolev inequalities are invariant under quasi-homeomorphisms between quasi-regular Dirichlet spaces.  We hope that such a method can be applied to more infinite-dimensional  settings or to prove other properties such as hypercontractivity and different functional inequalities, but in the paper we focus on the study of a logarithmic Sobolev inequality on one concrete class of infinite-dimensional spaces here. 

The key idea is that the group action we consider preserves cylinder function spaces, sub-Laplacians and horizontal gradients, and thus give us an example of such a quasi-homeomorphism. The hypoelliptic heat kernel measure on the quotient space is the pushforward under the quotient map of the hypoelliptic heat kernel measure on the Heisenberg group. Finally, we show that the LSI constant does not increase under such a map. 

Our paper is organized as follows. First, in Section~\ref{sec.LSI.Quasi-homeomorphism} we prove a general result on logarithmic Sobolev inequalities on quasi-homeomorphic quasi-regular Dirichlet forms. In Section~\ref{sec.ReducedHeisenberg} we construct the infinite-dimensional reduced Heisenberg group. Next in Section~\ref{sec.Properties}, we explore its basic properties, including those of the corresponding sub-Laplacian, heat kernel measure, Dirichlet form. Finally, in Section~\ref{sec.LSI.Reduced}, we deduce a logarithmic Sobolev inequality with respect to the hypoelliptic heat kernel measure and we discuss the LSI constant.

\section{Logarithmic Sobolev inequalities under quasi-homeomorphisms} \label{sec.LSI.Quasi-homeomorphism}

We start by formulating a general result about logarithmic Sobolev inequalities  under quasi-homeomorphisms between  quasi-regular Dirichlet spaces. For simplicity, we only formulate a theorem and give a proof in such a setting here. For more details related to quasi-regular Dirichlet spaces, we refer to \cite{ChenZQMaZMRockner1994, Fukushima1999}.

Let $(E,\mu,\mathcal{E},\mathcal{D}(\mathcal{E}))$ be a quasi-regular Dirichlet space, where $E$ is a Hausdorff topological space with a countable base, $\mu$ is a $\sigma$-finite positive Borel measure on $E$ and $\mathcal{E}$ a quasi-regular Dirichlet form with its domain $\mathcal{D}(\mathcal{E}) \subseteq L^2(E,d\mu)$.  

\begin{notation}
The $\mathcal{E}$-norm is given by
\begin{align*}
\Vert f \Vert_{\mathcal{E}}^2:=\Vert f\Vert^2_{L^2\left(E,d\mu\right)}+\mathcal{E}(f,f)
\end{align*}
for any $f\in \mathcal{D}(\mathcal{E})$. Given a closed subset $F \subseteq E$, we denote
\begin{align*}
\mathcal{D}(\mathcal{E})_{F}=\{f\in \mathcal{D}(\mathcal{E}):f=0 \, \mu-a.e. \, \text{on} \, F^c\}.
\end{align*}
\end{notation}

By the definition of quasi-regular Dirichlet forms, there exists an \emph{$\mathcal{E}$-nest}, that is, there exists an increasing sequence $\{F_k\}_{k=1}^{\infty}$ of closed subsets of $E$ such that $\bigcup\limits_{k=1}^{\infty} \mathcal{D}(\mathcal{E})_{F_k}$ is dense in $\mathcal{D}(\mathcal{E})$ with respect to the $\|\cdot\|_{\mathcal{E}}$-norm.

\begin{definition}
A quasi-regular Dirichlet space $(E_1,\mu_1,\mathcal{E}_1,\mathcal{D}(\mathcal{E}_1))$ is  \emph{quasi-homeomorphic} to another quasi-regular Dirichlet space $(E_2,\mu_2,\mathcal{E}_2,\mathcal{D}(\mathcal{E}_2))$ if there exists a map $\varphi: E_1 \longrightarrow E_2$ such that
\begin{enumerate}
    \item the pushforward of $\mu_1$ to $E_2$ is $\mu_2$, that is, $\mu_2=\varphi_{\#}\mu_1$,
    \item for each $k$, the restriction of map $\varphi$ to $F_1^k$ is a homeomorphism from $F^k_1$ onto $F^k_2$,
    \item for any $f,h\in \mathcal{D}(\mathcal{E}_2)$, we have $f\circ \varphi, h\circ \varphi \in \mathcal{D}(\mathcal{E}_1)$ and $\mathcal{E}_2(f,h)=\mathcal{E}_1(f\circ \varphi, h\circ \varphi)$.
\end{enumerate}
Such a map $\varphi$ is said to be a \emph{quasi-homeomorphism} from $(E_1,\mu_1,\mathcal{E}_1,\mathcal{D}(\mathcal{E}_1))$ to $(E_2,\mu_2,\mathcal{E}_2,\mathcal{D}(\mathcal{E}_2))$.
\end{definition}

\begin{notation}
We say that a quasi-regular Dirichlet space $(E,\mu,\mathcal{E},\mathcal{D}(\mathcal{E}))$ satisfies a logarithmic Sobolev inequality with the constant $C\left(E,\mu,\mathcal{E},\mathcal{D}(\mathcal{E})\right)$ if
\begin{align} \label{LSI}
\int_{E}f^2\log f^2d\mu-\left(\int_{E}f^2 d\mu\right)\log\left(\int_{E}f^2d\mu\right) \leqslant C\left(E,\mu,\mathcal{E},\mathcal{D}(\mathcal{E})\right) \mathcal{E}(f,f)
\end{align}
for any $f\in \mathcal{D}\left(\mathcal{E}\right)$. In such a case we also say that $LSI_C(E,\mu,\mathcal{E},\mathcal{D}(\mathcal{E}))$ holds. 
\end{notation}

\begin{theorem} \label{thm.LSI.Quasi-homeomorphism}
Given two quasi-regular Dirichlet spaces $(E_i,\mu_i,\mathcal{E}_i,\mathcal{D}(\mathcal{E}_i))$, suppose $\varphi: E_1 \longrightarrow E_2$ is a quasi-homeomorphism between these two Dirichlet spaces. If $LSI_C(E_1,\mu_1,\mathcal{E}_1,\mathcal{D}(\mathcal{E}_1))$ holds on $E_1$, then $LSI_C(E_2, \mu_2, \mathcal{E}_2, \mathcal{D}(\mathcal{E}_2))$ holds on $E_2$. Moreover, the constant $C(E_2,\mu_2,\mathcal{E}_2,\mathcal{D}(\mathcal{E}_2))$ can be chosen to be
\begin{align}
C(E_2,\mu_2,\mathcal{E}_2,\mathcal{D}(\mathcal{E}_2))=C(E_1,\mu_1,\mathcal{E}_1,\mathcal{D}(\mathcal{E}_1)).
\end{align}
\end{theorem}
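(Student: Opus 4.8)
The plan is to transport the logarithmic Sobolev inequality from $E_1$ to $E_2$ through the quasi-homeomorphism $\varphi$, using that $\varphi$ intertwines the Dirichlet forms and pushes $\mu_1$ forward to $\mu_2$. The natural first step is to take an arbitrary $f \in \mathcal{D}(\mathcal{E}_2)$ and set $g := f \circ \varphi$. By property (3) in the definition of quasi-homeomorphism, $g \in \mathcal{D}(\mathcal{E}_1)$ and $\mathcal{E}_2(f,f) = \mathcal{E}_1(g,g)$, so the right-hand side of \eqref{LSI} is already matched. It remains to match the left-hand side, i.e.\ the entropy functional, and this is where the change-of-variables formula for pushforward measures enters.

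The key computation is: for any Borel function $\Phi$ on $\mathbb{R}$ for which the integrals make sense,
\begin{align*}
\int_{E_2} \Phi(f)\, d\mu_2 = \int_{E_2} \Phi(f)\, d(\varphi_\# \mu_1) = \int_{E_1} \Phi(f \circ \varphi)\, d\mu_1 = \int_{E_1} \Phi(g)\, d\mu_1.
\end{align*}
Applying this with $\Phi(t) = t^2 \log t^2$ and with $\Phi(t) = t^2$ converts every term on the left-hand side of the inequality \eqref{LSI} for $f$ on $E_2$ into the corresponding term for $g$ on $E_1$:
\begin{align*}
\int_{E_2} f^2 \log f^2\, d\mu_2 - \left(\int_{E_2} f^2\, d\mu_2\right)\log\left(\int_{E_2} f^2\, d\mu_2\right) = \int_{E_1} g^2 \log g^2\, d\mu_1 - \left(\int_{E_1} g^2\, d\mu_1\right)\log\left(\int_{E_1} g^2\, d\mu_1\right).
\end{align*}
Since $LSI_C(E_1,\mu_1,\mathcal{E}_1,\mathcal{D}(\mathcal{E}_1))$ holds and $g \in \mathcal{D}(\mathcal{E}_1)$, the right-hand side of this identity is bounded by $C(E_1,\mu_1,\mathcal{E}_1,\mathcal{D}(\mathcal{E}_1))\,\mathcal{E}_1(g,g) = C(E_1,\mu_1,\mathcal{E}_1,\mathcal{D}(\mathcal{E}_1))\,\mathcal{E}_2(f,f)$, which is exactly \eqref{LSI} on $E_2$ with the same constant. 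Since $f \in \mathcal{D}(\mathcal{E}_2)$ was arbitrary, this proves $LSI_C(E_2,\mu_2,\mathcal{E}_2,\mathcal{D}(\mathcal{E}_2))$.

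I do not expect a serious obstacle here: the argument is essentially a clean application of the abstract change-of-variables formula combined with the defining properties of a quasi-homeomorphism, and these are exactly the right hypotheses. The only points requiring a little care are measurability of $f \circ \varphi$ (so that the pushforward formula applies), which follows since $\varphi$ is continuous on each $F_1^k$ and the $F_1^k$ exhaust $E_1$ up to an $\mathcal{E}_1$-nest, hence up to a $\mu_1$-null set by quasi-regularity; and the observation that property (3), as stated for pairs $f,h \in \mathcal{D}(\mathcal{E}_2)$, includes in particular the diagonal case $h = f$ used above. One might also remark that the argument is symmetric in $E_1$ and $E_2$ once one knows $\varphi$ admits a quasi-homeomorphic inverse, so the two LSI constants in fact coincide, though for the stated theorem only the one-sided inequality $C(E_2,\cdots) \leqslant C(E_1,\cdots)$ is needed.
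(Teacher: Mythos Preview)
Your proof is correct and follows essentially the same approach as the paper: pull back $f\in\mathcal{D}(\mathcal{E}_2)$ to $f\circ\varphi\in\mathcal{D}(\mathcal{E}_1)$ via property (3), apply the LSI on $E_1$, and use the pushforward identity $\mu_2=\varphi_\#\mu_1$ to convert the entropy terms. Your write-up is in fact more detailed than the paper's, which simply invokes the change of variables without the extra remarks on measurability or on the symmetric direction.
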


\begin{proof}
For any $f\in \mathcal{D}(\mathcal{E}_2)$, we have $f\circ \varphi\in \mathcal{D}(\mathcal{E}_1)$. Using the change of variables $\varphi(x_1)\mapsto x_2 \in E_2$ in 
\begin{align} 
& \int_{E_1}(f\circ\varphi)^2\log (f\circ\varphi)^2d\mu_1(x_1)
\notag
\\
&-\left(\int_{E_1}(f\circ\varphi)^2 d\mu_1(x_1)\right)\log\left(\int_{E_1}(f\circ\varphi)^2d\mu_1(x_1)\right) \\
&
\leqslant C(E_1,\mu_1,\mathcal{E}_1,\mathcal{D}(\mathcal{E}_1)) \mathcal{E}_1(f\circ\varphi,f\circ\varphi),
\notag
\end{align}
we see that 
\begin{align*} 
& \int_{E_2}f^2\log f^2d\mu_2(x_2)-\left(\int_{E_2}f^2 d\mu_2(x_2)\right)\log\left(\int_{E_2}f^2d\mu_2(x_2)\right) 
\\
& \leqslant C(E_2,\mu_2,\mathcal{E}_2,\mathcal{D}(\mathcal{E}_2)) \mathcal{E}_2(f,f),
\end{align*}
and we can choose $C(E_2,\mu_2,\mathcal{E}_2,\mathcal{D}(\mathcal{E}_2))=C(E_1,\mu_1,\mathcal{E}_1,\mathcal{D}(\mathcal{E}_1))$.
\end{proof}

\begin{remark}
We see that an LSI constant can be chosen to be the same for quasi-homeomorphic Dirichlet spaces. This gives a way to control the LSI constant under quasi-homeomrphisms.
\end{remark}

\begin{remark}
In Section~\ref{sec.ReducedHeisenberg} and \ref{sec.Properties}, we show that the group action of the normal subgroup $\Gamma$ on an infinite-dimensional Heisenberg group $G$ induces a quasi-homeomorphisms between the Dirichlet spaces on $G$ and the corresponding reduced Heisenberg group. 
\end{remark}

\section{Infinite-dimensional reduced Heisenberg groups} \label{sec.ReducedHeisenberg}

We start by constructing a family of infinite-dimensional \emph{reduced} Heisenberg groups, which are infinite-dimensional Lie groups modelled on an abstract Wiener space. Our construction is closely related to the infinite-dimensional Heisenberg group first introduced in \cite{DriverGordina2008}. In this section, we first describe our construction and then provide some examples.

\subsection{Abstract Wiener spaces}\label{s.wiener}

The concept of an abstract Wiener space goes back to L.~Gross in \cite{Gross1967} which allowed us to understand the structure of Gaussian measures on infinite-dimensional spaces. To keep the exposition self-contained, we start by summarizing well-known properties of Gaussian measures and abstract Wiener spaces needed later.  These results as well as more details on abstract Wiener spaces may be found in \cite{BogachevGaussianMeasures, KuoLNM1975}.

Suppose that $W$ is a real separable Banach space and $\mathcal{B}_{W}$ is the Borel $\sigma$-algebra on $W$.

\begin{definition}
\label{d.2.1}
A measure $\mu$ on $(W,\mathcal{B}_{W})$ is called a (mean zero,
non-degenerate) {\it Gaussian measure} provided that its characteristic
functional is given by
\begin{equation}
\label{e.gauss}
\hat{\mu}(u) := \int_W e^{iu(x)} d\mu(x)
	= e^{-\frac{1}{2}q(u,u)}, \qquad \text{ for all } u \in W^*,
\end{equation}
for $q=q_\mu:W^*\times W^*\rightarrow\mathbb{R}$ a symmetric, positive definite quadratic form.
That is, $q$ is a real inner product on $W^*$.
\end{definition}

A proof of the following standard theorem may be found for example in  \cite[Appendix
A]{DriverGordina2008} and \cite[Lemma 3.2]{BaudoinGordinaMelcher2013}.

\begin{theorem}
\label{t.2.3}
Let $\mu$ be a Gaussian measure on a real separable Banach space $W$.
For $p\in[1,\infty)$, let
\begin{equation}
\label{e.2.2}
C_p :=\int_W \|w\| _{W}^{p} \,d\mu(w).
\end{equation}
For $w\in W$, let
\[
\|w\|_H := \sup\limits_{u\in W^*\setminus\{0\}}\frac{|u(w)|}{\sqrt{q(u,u)}}
\]
and define the {\em Cameron-Martin subspace} $H\subset W$ by
\[ H := \{h\in W : \|h\|_H < \infty\}. \]
Then
\begin{enumerate}
\item \label{i.1}
For all  $p\in[1,\infty)$, $C_p<\infty$.

\item $H$ is a dense subspace of $W$.

\item There exists a unique inner product $\langle\cdot,\cdot\rangle_H$
on $H$ such that $\|h\|_H^2 = \langle h,h\rangle_H$ for all $h\in H$, and
$H$ is a separable Hilbert space with respect to this inner product.

\item \label{i.3}
For any $h\in H$,
$\|h\|_W \le \sqrt{C_2} \|h\|_H$.

\item \label{i.5}
If $\{e_j\}_{j=1}^\infty$ is an orthonormal basis for $H$, then for any
$u,v\in H^*$
\[ q(u,v) = \langle u,v\rangle_{H^*} = \sum_{j=1}^\infty u(e_j)v(e_j).
\]
\item \label{l.q}
If $u,v\in W^*$, then
\[
\int_W u(w)v(w)\,d\mu(w) = q(u,v).
 \]
\end{enumerate}
\end{theorem}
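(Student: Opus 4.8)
The plan is to build everything around the \emph{first Wiener chaos}: the closure $\mathcal{H}\subseteq L^2(W,\mu)$ of the image of the map $W^*\ni u\mapsto u(\cdot)$, together with the Bochner-integral construction $\mathcal{H}\ni\xi\mapsto h_\xi:=\int_W w\,\xi(w)\,d\mu(w)\in W$. With these two objects in hand, parts (2)--(6) become essentially Hilbert-space bookkeeping, so the genuine analytic content sits in part (1), which I expect to be the main obstacle.

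First I would prove (1), which is equivalent to Fernique's theorem: $\int_W e^{\varepsilon\|w\|_W^2}\,d\mu(w)<\infty$ for some $\varepsilon>0$. I would argue via rotation invariance of $\mu\otimes\mu$: if $X,Y$ are independent with law $\mu$, then \eqref{e.gauss} shows $(X-Y)/\sqrt{2}$ and $(X+Y)/\sqrt{2}$ are again independent with law $\mu$. From $\|X\|\le s$ and $\|Y\|>t$ one deduces $\|(X\pm Y)/\sqrt{2}\|>(t-s)/\sqrt{2}$, giving $\mu(\|w\|\le s)\,\mu(\|w\|>t)\le \mu(\|w\|>(t-s)/\sqrt{2})^2$. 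Iterating along $t_{n+1}=s+\sqrt{2}\,t_n$ (which grows like $2^{n/2}$), with $s$ fixed large enough that $\mu(\|w\|>s)<\mu(\|w\|\le s)$, yields a Gaussian tail bound $\mu(\|w\|>t)\le Ce^{-\alpha t^2}$, whence all $C_p<\infty$. Part (6) is then immediate: by \eqref{e.gauss}, $u(\cdot)$ is a centered Gaussian under $\mu$ with variance $q(u,u)$, so $u(\cdot)\in L^2(\mu)$ and polarization gives $\int_W u(w)v(w)\,d\mu(w)=q(u,v)$. Part (4) follows from Hahn--Banach and Cauchy--Schwarz: $\|h\|_W=\sup_{\|u\|_{W^*}\le 1}|u(h)|\le\|h\|_H\sup_{\|u\|_{W^*}\le 1}\sqrt{q(u,u)}\le\sqrt{C_2}\,\|h\|_H$, since $q(u,u)=\int_W u^2\,d\mu\le\|u\|_{W^*}^2 C_2$.

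Next I would identify $H$ with $\mathcal{H}$. Because $C_2<\infty$, for $\xi\in\mathcal{H}$ the Bochner integral $h_\xi$ converges in $W$, and $u(h_\xi)=\int_W u(w)\xi(w)\,d\mu(w)=\langle u(\cdot),\xi\rangle_{L^2(\mu)}$ for all $u\in W^*$. This gives $\|h_\xi\|_H\le\|\xi\|_{L^2(\mu)}$ from the supremum defining $\|\cdot\|_H$; approximating $\xi$ by elements $u_n(\cdot)$ and testing against $u_n$ gives the reverse inequality, so $\xi\mapsto h_\xi$ is isometric. Conversely, for $h\in H$ the functional $u\mapsto u(h)$ satisfies $|u(h)|\le\|h\|_H\|u(\cdot)\|_{L^2(\mu)}$, hence extends to $\mathcal{H}$ and is represented by some $\xi_h\in\mathcal{H}$; then $u(h_{\xi_h})=u(h)$ for all $u\in W^*$ forces $h=h_{\xi_h}$ by Hahn--Banach. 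Thus $\xi\mapsto h_\xi$ is a linear isometric bijection from the closed subspace $\mathcal{H}\subseteq L^2(\mu)$ onto $H$; transporting the $L^2(\mu)$-inner product proves (3), with separability inherited from $L^2(W,\mu)$ (which is separable since $W$ is) and uniqueness of the inner product given by polarization. For (2): any nonzero $u\in W^*$ has $q(u,u)>0$ and $u(h_{u(\cdot)})=q(u,u)\ne 0$, so no nonzero functional annihilates $H$, hence $H$ is dense in $W$.

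Finally, for (5) I would use the inclusion $W^*\hookrightarrow H^*$ by restriction (bounded by (4)). The isometry above says $u(h)=\langle h_u,h\rangle_H$ for all $h\in H$, so $h_u$ is the Riesz representative of $u|_H$, whence $\langle u,v\rangle_{H^*}=\langle h_u,h_v\rangle_H=\langle u(\cdot),v(\cdot)\rangle_{L^2(\mu)}=q(u,v)$ by (6). Since $\{h_u:u\in W^*\}$ is dense in $H$ (if $h\perp h_u$ for all $u$ then $u(h)=0$ for all $u$, so $h=0$), $W^*$ is dense in $H^*$, and both $q$ and $\langle\cdot,\cdot\rangle_{H^*}$ are continuous, so they agree on all of $H^*$. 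The identity $\langle u,v\rangle_{H^*}=\sum_j u(e_j)v(e_j)$ for an orthonormal basis $\{e_j\}$ of $H$ is then Parseval applied to the Riesz representatives, noting $u(e_j)=\langle(\text{rep.\ of }u),e_j\rangle_H$.
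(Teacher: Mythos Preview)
Your proof sketch is correct and complete; the Fernique rotation-invariance argument for (1), the first Wiener chaos identification for (2)--(3), and the Riesz/Parseval bookkeeping for (4)--(6) are all standard and sound. Note, however, that the paper does \emph{not} supply its own proof of this theorem: it simply states the result and cites \cite[Appendix A]{DriverGordina2008} and \cite[Lemma 3.2]{BaudoinGordinaMelcher2013} for the details. Your approach via the Bochner integral $\xi\mapsto h_\xi$ is essentially the one taken in those references (and in Bogachev's and Kuo's textbooks that the paper also cites), so there is nothing to compare---you have reconstructed the standard proof that the paper chose to omit.
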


\subsection{Reduced Heisenberg groups}

We construct a family of infinite-dimensional reduced Heisenberg groups and their Cameron-Martin subgroups.

We first revisit the definition of the infinite-dimensional Heisenberg-like groups. Here we only consider the case when the center is one-dimensional.  For more details and explanation, we refer to \cite{DriverGordina2008, GordinaMelcher2013, BaudoinGordinaMelcher2013, DriverEldredgeMelcher2016, Gordina2017, GordinaLuo2022} etc.

Suppose $\omega: W \times W \rightarrow \mathbb{R}$ is a continuous non-degenerate skew-symmetric bilinear (symplectic) form on $W$ and $\operatorname{Span}\{\omega(u,v):u,v\in H\}=\mathbb{R}$.

\begin{definition} \label{df.InfiniteDimensionalHeisenbergGroup}
An \emph{infinite-dimensional Heisenberg-like group} $G$ is the set $W\times\mathbb{R}$ equipped with the group law given by
\begin{equation}
\label{eqn.GroupLaw.Heisenberg}
(w_1,c_1)\cdot(w_2,c_2) = \left( w_1 + w_2, c_1 + c_2 +
    \frac{1}{2}\omega(w_1,w_2)\right).
\end{equation}
for any $g_i=(w_i,c_i) \in W\times \mathbb{R}$. The \emph{Cameron-Martin subgroup} $G_{CM}$ is the subset $H\times \mathbb{R}$ equipped with the same group law as \eqref{eqn.GroupLaw.Heisenberg} restricted to $H\times \mathbb{R}$.
\end{definition}

\begin{remark}
The group $G$ is modelled on a Banach space with respect to the norm
\[
|(w,c)|_G := \sqrt{\|w\|_W^2+\vert c\vert^2}.
\]
The Cameron-Martin subgroup $G_{CM}$ is modelled on a Hilbert space with respect to the inner product
\[
\langle (A,a),(A^{\prime},a^{\prime})\rangle_{G_{CM}} :=\langle A,A^{\prime}\rangle_H+aa^{\prime}
\]
and the associated Hilbertian norm is given by
\[
|(A,a)|_{G_{CM}}:=\sqrt{\|A\|_H^2+\vert a\vert^2}.
\]
In this case, 
both $G$ and $G_{CM}$ are infinite-dimensional Lie groups, with the same identity element being $e=\left( 0, 0 \right)$. 
\end{remark}

Next we  construct infinite-dimensional \emph{reduced} Heisenberg groups and corresponding Cameron-Martin subgroups. Consider the central subgroup $\Gamma$ of $G$ given by
\begin{align*}
\Gamma:=\{(\mathbf{0},2k\pi):k\in \mathbb{Z}\}.
\end{align*}

\begin{definition}
We define the \emph{infinite-dimensional reduced Heisenberg group} $\widetilde{G}$ as the quotient space $G \backslash \Gamma$. We define the \emph{Cameron-Martin subgroup} $\widetilde{G}_{CM}$ of $\widetilde{G}$ as the quotient space $G_{CM} \backslash \Gamma$. 
\end{definition}

\begin{remark}
Note that both $\widetilde{G}$ and $\widetilde{G}_{CM}$ are indeed topological groups, as $\Gamma$ is a normal subgroup of both $G$ and $G_{CM}$. 
\end{remark}

For $g_i=(w_i,c_i)$, the group operation is still given by
\begin{equation*}
(w_1,c_1)\cdot(w_2,c_2) = \left( w_1 + w_2, c_1 + c_2 +
    \frac{1}{2}\omega(w_1,w_2)\right).
\end{equation*}
where $c_1 + c_2 +\frac{1}{2}\omega(w_1,w_2)$ is taken modulo $2\pi$. 

The center of the
reduced Heisenberg group is then compact and can be identified with
the one-dimensional torus  $[0,2\pi) \cong \mathbb{T}^1$. In this way, we can have the identification $\widetilde{G}\cong W \times \mathbb{T}^1$ and $\widetilde{G}_{CM} \cong H \times \mathbb{T}^1$. If we use the angle coordinate on $\mathbb{T}^1$, an equivalent definition of the infinite-dimensional reduced Heisenberg group $\widetilde{G}$ can be given as the topological space $W \times \mathbb{T}^1$ equipped with the group law given by 
\begin{align} \label{eqn.GroupLaw.Reduced}
(w_1,e^{c_1 i})\cdot(w_2,e^{c_2 i}) = \left( w_1 + w_2, e^{(c_1 + c_2 +
    \frac{1}{2}\omega(w_1,w_2))i}\right).
\end{align}
for any $(w_1,e^{c_1 i}),(w_2,e^{c_2 i}) \in \widetilde{G}$ with $w_1,w_2\in W$ and $c_1,c_2\in \mathbb{R}$. Similarly the \emph{Cameron-Martin subgroup} $\widetilde{G}_{CM}$ is the subset $H\times \mathbb{T}^1$ equipped with the same group law as \eqref{eqn.GroupLaw.Reduced} restricted to $H\times \mathbb{T}^1$.

\begin{remark}
We see that $\widetilde{G}$ is a smooth Banach manifold while $\widetilde{G}_{CM}$ is a smooth Hilbert manifold.
\end{remark}

Next, we discuss the correspondence between  these infinite-dimensional Lie groups and  Lie algebras. Note that the tangent space $T_{\widetilde{e}}\widetilde{G}$ at $\widetilde{e}\in \widetilde{G}$ can be identified with the space  $W\times \mathbb{R}$. It is a Lie algebra (denoted by  $\widetilde{\mathfrak{g}} \cong T_{\widetilde{e}}\widetilde{G}$) corresponding to $G$ when equipped with the Lie bracket given by
\begin{equation}\label{e.LieBracket.Reduced}
\left[
\left( \mathbf{a}_{1}, c_{1} \right), \left( \mathbf{a}_{2}, c_{2} \right)  \right]_{\widetilde{\mathfrak{g}}} = \left(0,
\omega\left( \mathbf{a}_{1}, \mathbf{a}_{2} \right)  \right).
\end{equation}
Also, the tangent space $T_{\widetilde{e}}\widetilde{G}_{CM}$ at $\widetilde{e}\in \widetilde{G}_{CM}$ can be identified with the space  $H\times \mathbb{R}$. It is a Lie algebra (denoted by  $\widetilde{\mathfrak{g}}_{CM}$) corresponding to $\widetilde{G}_{CM}$ when equipped with the same Lie bracket as \eqref{e.LieBracket.Reduced} restricted to $H\times \mathbb{R}$. Moreover, note that the surjectivity of $\omega$ implies 
\[
[W \times \{0\},W \times \{0\}]_{\widetilde{\mathfrak{g}}}=\{0\} \times \mathbb{R}.
\]

As $\widetilde{\mathfrak{g}} \cong W\times \mathbb{R}$ and $\widetilde{\mathfrak{g}}_{CM} \cong H\times \mathbb{R}$, we see that $\widetilde{\mathfrak{g}}$ is a Banach space with the norm
\[ |(w,c) |_{\widetilde{{\mathfrak{g}}}} := \sqrt{\|w\|_W^2 + \vert c\vert^2}. \]
Analogously, we see that $\widetilde{\mathfrak{g}}_{CM}$ is both a Banach space with the norm
\[ |(A,a)|_{\widetilde{\mathfrak{g}}_{CM}} := \sqrt{\|A\|_H^2 + \vert a\vert^2} \]
and a Hilbert space with the inner product
\[
\langle (A,a),(A^{\prime},a^{\prime})\rangle_{\mathfrak{g}_{CM}} :=\langle A,A^{\prime}\rangle_H+aa^{\prime}.
\]

\begin{remark}
In the Banach space topology induced by $| \cdot |_{\widetilde{{\mathfrak{g}}}}$, the Lie algebra $\widetilde{\mathfrak{g}}_{CM}$ is dense in the Lie algebra $\widetilde{\mathfrak{g}}$. Furthermore, in the canonical manifold topology (which is equivalent to the product topology on $\widetilde{G} \cong W \times \mathbb{T}^1$), the Cameron-Martin subgroup $\widetilde{G}_{CM}$ is dense in $\widetilde{G}$.
\end{remark}
Now we can introduce the associated natural quotient maps.

\begin{notation}
We denote  by $\varphi:G \rightarrow \widetilde{G}$ and $\varphi_{CM}:G_{CM} \rightarrow \widetilde{G}_{CM}$ the \emph{quotient maps}.
\end{notation}

Both quotient maps $\varphi$ and $\varphi_{CM}$ are also group homomorphisms.
Under  identification $\widetilde{G}\cong W \times \mathbb{T}^1$ and $\widetilde{G}_{CM} \cong H \times \mathbb{T}^1$, explicit formulae for $\varphi$ and $\varphi_{CM}$ are given by 
\begin{align} \label{eqn.QuotientMap}
\varphi(w,c)=(w,e^{ci})
\end{align}
for any $(w,c) \in G$ and
\begin{align} \label{eqn.QuotientMap.CM}
\varphi_{CM}(w,c)=(w,e^{ci})
\end{align}
for any $(w,c) \in G_{CM}$.

\begin{remark}
Both quotient maps $\varphi:G \rightarrow \widetilde{G}$ and $\varphi_{CM}:G_{CM} \rightarrow \widetilde{G}_{CM}$ are smooth.
\end{remark}

Now  we study differentials of these quotient maps. Recall that the tangent spaces $T_eG$ and $T_{\widetilde{e}}\widetilde{G}$ identified with the space  $W\times \mathbb{R}$ while the tangent spaces $T_eG$ and $T_{\widetilde{e}}\widetilde{G}_{CM}$ can be identified with the space  $H\times \mathbb{R}$.

At the identity $e$, the differential $d\varphi_e:T_eG \cong W\times \mathbb{R} \longrightarrow T_{\widetilde{e}}\widetilde{G} \cong W\times \mathbb{R}$ of $\varphi$ is indeed the identity map, that is, $d\varphi_e=Id$ on $W\times \mathbb{R}$. Moreover, at the identity $e$, $d\varphi_e|_{H}: H \longrightarrow H$ is an isometry on $H$.

Similarly, at the identity $\widetilde{e}$, the differential $d(\varphi_{CM})_{e}:T_eG_{CM} \cong H\times \mathbb{R} \longrightarrow T_{\widetilde{e}}\widetilde{G}_{CM} \cong H\times \mathbb{R}$ of $\varphi_{CM}$ is the identity map, that is, $d\varphi_e=Id$ on $W\times \mathbb{R}$. Moreover, at the identity $e$, $d(\varphi_{CM})_e|_{H}: H \longrightarrow H$ is an isometry on $H$ as well.

\begin{remark}
In our setting, $G$ and $\widetilde{G}$, as well as $G_{CM}$ and $\widetilde{G}_{CM}$, correspond to the same Lie algebra.  Note that in general we do not have Lie's third theorem in infinite dimensions, therefore we cannot use topological constraints such as being simply connected to have a unique Lie group for a given Lie algebra. 

Recall that the tangent space $T_eG \cong W\times \mathbb{R}$ is a Lie algebra, denoted by $\mathfrak{g}$, corresponding to $G$ equipped with the Lie bracket given by
\begin{equation}
\label{eqn.LieBracket.Heisenberg}
[(X_1,V_1), (X_2,V_2)]_{\mathfrak{g}} := (0, \omega(X_1,X_2))
\end{equation}
for any $(X_j,V_j)\in \mathfrak{g} \cong W\times \mathbb{R}$. Also, the tangent space is $T_eG_{CM}$ a Lie algebra, denoted by $\mathfrak{g}_{CM}$, corresponding to $G_{CM}$ can be identified with the space $H\times \mathbb{R}$ equipped with the same Lie bracket as \eqref{eqn.LieBracket.Heisenberg} restricted to $H\times \mathbb{R}$. As we can see, both $d\varphi_e:\mathfrak{g} \cong T_eG \longrightarrow \widetilde{\mathfrak{g}} \cong T_{\widetilde{e}}\widetilde{G}$ and $d(\varphi_{CM})_{e}:\mathfrak{g}_{CM} \cong T_eG_{CM} \longrightarrow \widetilde{\mathfrak{g}}_{CM}\cong  T_{\widetilde{e}}\widetilde{G}_{CM}$ are Lie algebra isomorphisms, which implies our statement.
\end{remark}
We give an explicit formula for the exponential map on $\widetilde{G}$.

\begin{lemma}
The exponential map $\operatorname{exp}_{\widetilde{G}}: \widetilde{\mathfrak{g}} \longrightarrow \widetilde{G}$ is given by
\begin{equation}
\operatorname{exp}_{\widetilde{G}}(A,a)=(A,e^{ai})
\end{equation}
for any $(A,a) \in \widetilde{\mathfrak{g}}$, which is a local diffeomorphism.
\end{lemma}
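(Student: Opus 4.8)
The plan is to verify directly that the stated formula is the Lie-group exponential, by computing one-parameter subgroups of $\widetilde{G}$ and then differentiating the quotient map. First I would recall that on the Heisenberg-like group $G = W \times \mathbb{R}$ the exponential map is already known (from \cite{DriverGordina2008}) to be the identity in coordinates, i.e. $\operatorname{exp}_G(A,a) = (A,a)$; indeed, since the Lie bracket $[(A,a),(A,a)]_{\mathfrak{g}} = (0,\omega(A,A)) = 0$, the curve $t \mapsto (tA, ta)$ is a one-parameter subgroup of $G$, as one checks from the group law \eqref{eqn.GroupLaw.Heisenberg} using skew-symmetry of $\omega$. Then, because $\varphi : G \to \widetilde{G}$ is a smooth group homomorphism with $d\varphi_e = \mathrm{Id}$ on $W \times \mathbb{R}$, naturality of the exponential map gives $\operatorname{exp}_{\widetilde{G}} \circ\, d\varphi_e = \varphi \circ \operatorname{exp}_G$, so that
\begin{align*}
\operatorname{exp}_{\widetilde{G}}(A,a) = \operatorname{exp}_{\widetilde{G}}(d\varphi_e(A,a)) = \varphi(\operatorname{exp}_G(A,a)) = \varphi(A,a) = (A, e^{ai}),
\end{align*}
using the explicit formula \eqref{eqn.QuotientMap} for $\varphi$.

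Alternatively, and perhaps more self-contained, I would argue intrinsically on $\widetilde{G} \cong W \times \mathbb{T}^1$: for fixed $(A,a) \in \widetilde{\mathfrak{g}}$, consider the curve $\gamma(t) = (tA, e^{tai})$ and check from the group law \eqref{eqn.GroupLaw.Reduced} that $\gamma(s)\cdot\gamma(t) = \gamma(s+t)$, which again reduces to $\omega(sA, tA) = st\,\omega(A,A) = 0$. Differentiating at $t=0$ shows $\gamma'(0)$ corresponds to $(A,a) \in \widetilde{\mathfrak{g}} \cong T_{\widetilde{e}}\widetilde{G}$ (here the $\mathbb{T}^1$-component derivative of $e^{tai}$ at $t=0$ is $a$ in the angle coordinate), so $\gamma$ is exactly the one-parameter subgroup generated by $(A,a)$, whence $\operatorname{exp}_{\widetilde{G}}(A,a) = \gamma(1) = (A, e^{ai})$.

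Finally, for the local diffeomorphism claim I would invoke the inverse function theorem on Banach manifolds: the differential of $\operatorname{exp}_{\widetilde{G}}$ at $0 \in \widetilde{\mathfrak{g}}$ is the identity on $T_{\widetilde{e}}\widetilde{G} \cong W \times \mathbb{R}$ (as is standard for exponential maps, and as is visible from the formula $(A,a) \mapsto (A, e^{ai})$ whose differential at the origin is $(A,a) \mapsto (A,a)$ under the identification $T_1\mathbb{T}^1 \cong \mathbb{R}$). Since $\operatorname{exp}_{\widetilde{G}}$ is smooth between Banach manifolds with invertible differential at the origin, it is a diffeomorphism from a neighborhood of $0$ onto a neighborhood of $\widetilde{e}$; this is in fact clear globally away from the wrap-around of the $\mathbb{T}^1$-factor, since $a \mapsto e^{ai}$ is a local diffeomorphism $\mathbb{R} \to \mathbb{T}^1$.

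I expect the only genuinely delicate point to be making precise that the abstract notion of $\operatorname{exp}_{\widetilde{G}}$ agrees with the curve construction in the infinite-dimensional Banach-Lie setting — i.e. that one-parameter subgroups exist and are generated by the naive coordinate lines — but this is immediate here because the nilpotency of the bracket makes the candidate curves manifestly homomorphisms; no solving of ODEs is required. Everything else is routine bookkeeping with the group law and the formula for $\varphi$.
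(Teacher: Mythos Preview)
Your proposal is correct and your first approach is essentially the paper's own proof: cite $\operatorname{exp}_G(A,a)=(A,a)$ from \cite{DriverGordina2008}, use naturality of the exponential under the homomorphism $\varphi$ together with $d\varphi_e=\mathrm{Id}$, and read off the formula; the local diffeomorphism is then immediate from the explicit expression. Your alternative intrinsic verification via the one-parameter subgroup $\gamma(t)=(tA,e^{tai})$ is a welcome addition but not needed, and your inverse-function-theorem justification for the local diffeomorphism is slightly more detailed than what the paper writes.
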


\begin{proof}
From \cite[Lemma 3.8]{DriverGordina2008}, the exponential map $\operatorname{exp}_{G}:\widetilde{\mathfrak{g}} \longrightarrow G$ is given by
\begin{equation} \label{eqn.ExponentialMap.Heisenberg}
\operatorname{exp}_{G}(A,a)=(A,a)
\end{equation}
for any $(A,a) \in \mathfrak{g}$, which is a global diffeomorphism. Now, for any $X\in\mathfrak{g}$, we know that $\varphi\left(\operatorname{exp}_G(X)\right)=\operatorname{exp}_{\widetilde{G}}\left(d\varphi_e(X)\right)$, as $\varphi$ is a Lie group homomorphism. Since $d\varphi_e=Id$ on $W\times \mathbb{R}$, so is $(d\varphi_e)^{-1}$. In this way, for any $(A,a) \in \widetilde{\mathfrak{g}}$, we have
\begin{align*}
\operatorname{exp}_{\widetilde{G}}\left(A,a\right) & =\varphi\left(\operatorname{exp}_G((d\varphi_e)^{-1}(A,a))\right)
\\
&
=\varphi\left(\operatorname{exp}_G((A,a)\right)
\\
&
=\varphi(A,a)=(A,e^{ai})
\end{align*}
where the third equality follows from \eqref{eqn.ExponentialMap.Heisenberg}. Such an explicit formula shows that the exponential map is a local diffeomorphism.
\end{proof}

\subsection{Examples}

We list some basic examples for the construction of
these infinite-dimensional Heisenberg groups.

\begin{example} [Finite-dimensional groups]
\label{ex.Heis}
Let $W=H\cong\mathbb{R}^{2n}$. Suppose $\omega$ is a symplectic form on $\mathbb{R}^{2n}$. Then $G=\mathbb{R}^{2n}\times\mathbb{R}$ equipped with the group operation defined by \eqref{eqn.GroupLaw.Heisenberg} is a non-isotropic
Heisenberg group (see \cite{GordinaLuo2022} for more details) and $\widetilde{G}=\mathbb{R}^{2n}\times\mathbb{T}^1$ equipped with a group operation as
defined by \eqref{eqn.GroupLaw.Reduced} is a finite-dimensional reduced Heisenberg group.
\end{example}

\begin{example}[Infinite-dimensional groups of a symplectic vector space]
\label{ex.infHeis}
Let $(K,\langle\cdot,\cdot\rangle)$ be a Hilbert space and $Q$ be a
strictly positive trace class operator on $K$.  For $h, k\in K$, let $\langle
h, k\rangle_Q:= \langle h, Qk \rangle$ and $\|h\|_Q:= \sqrt{\langle
h, h\rangle_Q}$, and let $(K_Q,\langle\cdot, \cdot\rangle_Q)$ denote the Hilbert
space completion of $(K,\|\cdot\|_Q)$.
Then $W=(K_Q)_{\operatorname{Re}}$ and $H=K_{\operatorname{Re}}$ determines an abstract Wiener
space (see, for example,  of \cite[Exercise 17 on p.59]{KuoLNM1975}).  Let
\[
\omega( w,z ) := \operatorname{Im}\langle w, z \rangle_Q,
\]
Then $G=(K_Q)_{\operatorname{Re}}\times\mathbb{R}$ equipped with a group operation as
defined by \eqref{eqn.GroupLaw.Heisenberg} is an infinite-dimensional Heisenberg-like group and $\widetilde{G}=(K_Q)_{\operatorname{Re}}\times\mathbb{T}^1$ equipped with a group operation as
defined by \eqref{eqn.GroupLaw.Reduced} is an infinite-dimensional reduced Heisenberg group.
\end{example}

\section{Properties of infinite-dimensional reduced Heisenberg groups} \label{sec.Properties}

In this section we study geometric and analytic properties of infinite-dimensional reduced Heisenberg groups, which are crucial to the logarithmic Sobolev inequalities we prove later.

\subsection{Finite-dimensional projection groups}\label{s.gpproj}

Given an abstract Wiener space $(W,H,\mu)$,
let $i:H\rightarrow W$ be the inclusion map, and $i^*:W^*\rightarrow H^*$ be
its transpose so that $i^*\ell:=\ell\circ i$ for all $\ell\in W^*$.  We take
\[ H_* := \{h\in H: \langle\cdot,h\rangle_H\in \mathrm{Range}(i^*)\subset H^*\},
\]
which is a dense subspace of $H$.

Suppose that $P:H\rightarrow H$ is a finite rank orthogonal projection
such that $PH\subset H_*$. We may extend $P$ to a (unique) continuous operator
from $W\rightarrow H$ and we still denote the extension by $P$. We refer to\cite[Section 3.4]{DriverGordina2008} for more details of the construction of $P$.

Let $\mathrm{Proj}(W)$ denote the collection of finite rank projections
on $W$ such that
\begin{enumerate}
\item $PW\subset H_*$,
\item $P|_H:H\rightarrow H$ is an orthogonal projection, and
\item $PW$ is sufficiently large to satisfy
H\"ormander's condition (that is, $\{\omega(A,B):A,B\in PW\}=\mathbb{R}$).
\end{enumerate}

\begin{notation}
\label{n.proj}

For each $P\in\mathrm{Proj}(W)$, we take $\Gamma_{P}=\{(\mathbf{0},2k\pi):k\in \mathbb{Z}\} \subseteq G_P$. We define $G_P:=
PW\times\mathbb{R}\subset H_*\times\mathbb{R}$ and $\widetilde{G}_P:=G_P\backslash \Gamma_P$. 
\end{notation}

Note that for each $P\in\mathrm{Proj}(W)$, $G_P$ is a subgroup of $G$. By the second isomorphism theorem, $\widetilde{G}_P=G_P\backslash \Gamma_P=G_P\backslash (\Gamma \cap G_P)$ is a subgroup of $\widetilde{G}=G \backslash \Gamma$. Here $\widetilde{G}_P$ can be identified as $\widetilde{G}_P \cong PW\times \mathbb{T}^1$. 

\begin{notation}
For each $P\in\mathrm{Proj}(W)$, we let $\mathfrak{g}_P=\mathrm{Lie}(G_P) = PW\times\mathbb{R}$ and $\widetilde{\mathfrak{g}}_P=\mathrm{Lie}(\widetilde{G}_P) = PW\times\mathbb{R}$.
\end{notation}

As we can see, when restricted to the finite-dimensional setting, $G_P$ and $\widetilde{G}_P$ have the same Lie algebra.

\begin{notation}
For each $P\in\mathrm{Proj}(W)$, we denote the corresponding
projections by $\pi_P:G\rightarrow G_P$ \[ \pi_P(w,x):= (Pw,x). \] 
for any for any $w\in W$ and any $x\in \mathbb{R}^1$, and $\widetilde{\pi}_P:\widetilde{G}\rightarrow \widetilde{G}_P$ \[ \widetilde{\pi}_P(w,s):= (Pw,s) \] 
for any $w\in W$ and any $s\in \mathbb{T}^1$.
\end{notation}

From \cite[p.22-23]{GordinaLuo2022}, $G_P$ is (isomorphic to) a non-isotropic Heisenberg group. Then we see that $\widetilde{G}_P$ is a finite-dimensional reduced Heisenberg group and we also have the finite-dimensional quotient map
\begin{align*}
\varphi_P: & G_P \longrightarrow \widetilde{G}_P
\\
& 
(\mathbf{v},c) \mapsto (\mathbf{v},e^{2\pi i c})
\end{align*}
for any $\mathbf{v}\in PW$ and any $c\in \mathbb{R}$. Direct computation shows that the following diagram
\begin{equation}
\begin{tikzcd} \label{ProjectionDiagram}
G \arrow{r}{\pi_P} \arrow[swap]{d}{\varphi} & G_P \arrow{d}{\varphi_P} \\%
\widetilde{G} \arrow{r}{\widetilde{\pi}_P}& \widetilde{G}_P
\end{tikzcd}
\end{equation}
commutes.

\subsection{Left-invariant vector fields}

We discuss the connection between left-invariant vector fields on $G$ and $\widetilde{G}$.

For any $g\in G$ we denote by $L_{g}:G\rightarrow G$ the left multiplication by $g$. For any $\widetilde{g}\in \widetilde{G}$ we denote by $L_{\widetilde{g}}:\widetilde{G}\rightarrow \widetilde{G}$ the left multiplication by $\widetilde{g}$. 

\begin{notation} 
We denote by $\widetilde{X}$ the left-invariant vector field $\widetilde{X}(g):=dL_{g}X$ induced by $X \in \mathfrak{g}$ on $G$. That is,  for any $X\in \mathfrak{g}$ and any $g \in G$, we have 
\begin{align} \label{eqn.VectorField.Heisenberg}
(\widetilde{X}f)(g):=\left.\frac{d}{dt}\right\vert_{t=0}f\left(g\operatorname{exp}_{G}\left(tX\right)\right).
\end{align}
\end{notation}

\begin{notation} 
We denote by $\widetilde{X}$ the left-invariant vector field $\widetilde{X}(\widetilde{g}):=dL_{\widetilde{g}}X$ induced by $X \in\widetilde{\mathfrak{g}}$ on $\widetilde{G}$. That is,  for any $X\in\widetilde{\mathfrak{g}}$ and any $\widetilde{g} \in \widetilde{G}$, we have 
\begin{align} \label{eqn.VectorFieldonHomogeneousSpace}
(\widetilde{X}f)(\widetilde{g}):=\left.\frac{d}{dt}\right\vert_{t=0}f\left(\widetilde{g}\operatorname{exp}_{\widetilde{G}}\left(tX\right)\right).
\end{align}
Note that we abuse the notation here for the notation of left-invariant vector fields on both $G$ and $\widetilde{G}$.
\end{notation}

\begin{lemma}
For any $X\in W \times \mathbb{R} \cong \mathfrak{g} \cong \widetilde{\mathfrak{g}}$ and any smooth function $f \in C^{\infty}(\widetilde{G})$, we have that for any $\widetilde{g} \in \widetilde{G}$
\begin{align} \label{eqn.VectorRelation1}
\widetilde{X} \left(\widetilde{g}\right)=\left(d\varphi_g(\widetilde{X})\right)(\widetilde{g})
\end{align}
with $\varphi(g)=\widetilde{g}$ for some $g\in G$. In this way, For any $X\in W \times \mathbb{R} \cong \mathfrak{g} \cong \widetilde{\mathfrak{g}}$ and any smooth function $f \in C^{\infty}(\widetilde{G})$
\begin{align}\label{eqn.VectorRelation2}
(\widetilde{X}f)\circ \varphi =\widetilde{X} (f\circ \varphi)
\end{align}
\end{lemma}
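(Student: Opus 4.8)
The plan is to work with the explicit formulae already established: the exponential maps $\operatorname{exp}_G(A,a)=(A,a)$ and $\operatorname{exp}_{\widetilde G}(A,a)=(A,e^{ai})$, the quotient map $\varphi(w,c)=(w,e^{ci})$, and the fact that $d\varphi_e=\operatorname{Id}$ on $W\times\mathbb R$. First I would fix $X\in W\times\mathbb R\cong\mathfrak g\cong\widetilde{\mathfrak g}$, a point $g\in G$ with $\varphi(g)=\widetilde g$, and a smooth $f\in C^\infty(\widetilde G)$, and simply compute both sides of \eqref{eqn.VectorRelation2} directly from the definitions \eqref{eqn.VectorField.Heisenberg} and \eqref{eqn.VectorFieldonHomogeneousSpace}. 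On one hand, $\bigl(\widetilde X(f\circ\varphi)\bigr)(g)=\frac{d}{dt}\big|_{t=0} f\bigl(\varphi(g\operatorname{exp}_G(tX))\bigr)$; on the other, $\bigl((\widetilde Xf)\circ\varphi\bigr)(g)=(\widetilde Xf)(\widetilde g)=\frac{d}{dt}\big|_{t=0}f\bigl(\widetilde g\operatorname{exp}_{\widetilde G}(tX)\bigr)$. So the whole statement reduces to the identity
\[
\varphi\bigl(g\operatorname{exp}_G(tX)\bigr)=\varphi(g)\operatorname{exp}_{\widetilde G}(tX)
\qquad\text{for all }t,
\]
after which differentiating at $t=0$ and invoking the chain rule finishes both \eqref{eqn.VectorRelation1} and \eqref{eqn.VectorRelation2}.

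The key step is therefore to verify that displayed identity, which is just the statement that $\varphi$ is a group homomorphism applied to the product $g\cdot\operatorname{exp}_G(tX)$, together with the compatibility $\varphi\circ\operatorname{exp}_G=\operatorname{exp}_{\widetilde G}\circ\, d\varphi_e=\operatorname{exp}_{\widetilde G}$ (since $d\varphi_e=\operatorname{Id}$), which was already used in the proof of the exponential-map lemma above. Concretely: $\varphi\bigl(g\operatorname{exp}_G(tX)\bigr)=\varphi(g)\,\varphi\bigl(\operatorname{exp}_G(tX)\bigr)=\varphi(g)\,\operatorname{exp}_{\widetilde G}(tX)=\widetilde g\,\operatorname{exp}_{\widetilde G}(tX)$. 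One can also double-check this by hand with $g=(w,c)$, $X=(A,a)$: the left side is $\varphi\bigl((w+tA,\,c+ta+\tfrac t2\omega(w,A))\bigr)=(w+tA,\,e^{(c+ta+\frac t2\omega(w,A))i})$, and the right side is $(w,e^{ci})\cdot(tA,e^{tai})=(w+tA,\,e^{(c+ta+\frac t2\omega(w,A))i})$ by \eqref{eqn.GroupLaw.Reduced}; the two agree.

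For \eqref{eqn.VectorRelation1} specifically, I would interpret $d\varphi_g(\widetilde X)$ as the pushforward of the left-invariant vector field $\widetilde X$ on $G$ under $\varphi$, evaluated at $\widetilde g=\varphi(g)$: by the chain rule, $\bigl(d\varphi_g(\widetilde X(g))\bigr)f=\frac{d}{dt}\big|_{t=0}f\bigl(\varphi(g\operatorname{exp}_G(tX))\bigr)$, and by the identity above this equals $\frac{d}{dt}\big|_{t=0}f\bigl(\widetilde g\operatorname{exp}_{\widetilde G}(tX)\bigr)=(\widetilde Xf)(\widetilde g)$, i.e.\ the value of the left-invariant field on $\widetilde G$ at $\widetilde g$; since this holds for all test functions $f$, the vector fields agree. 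I would also note that this is well-defined independently of the choice of lift $g$ of $\widetilde g$: any two lifts differ by an element of the central subgroup $\Gamma$, and left translation by a central element commutes with everything while $\varphi$ kills $\Gamma$, so the computation is unaffected.

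I do not expect a genuine obstacle here; the statement is essentially the functoriality of the exponential map under the Lie group homomorphism $\varphi$, specialized to our explicit setting. The only points requiring a word of care are (i) making sure the smooth-manifold structures on $G$ and $\widetilde G$ (Banach manifold on $W\times\mathbb R$, resp.\ $W\times\mathbb T^1$) are such that $\varphi$ is smooth and the chain rule applies as used — but this is recorded in the remarks preceding the statement — and (ii) the well-definedness with respect to the choice of lift, handled as above. The remaining manipulations are routine differentiations at $t=0$.
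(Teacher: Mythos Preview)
Your proposal is correct and follows essentially the same approach as the paper: both arguments rest on the fact that $\varphi$ is a group homomorphism together with $d\varphi_e=\operatorname{Id}$. The only cosmetic difference is that the paper differentiates the identity $L_{\widetilde g}\circ\varphi=\varphi\circ L_g$ at $e$ to obtain $dL_{\widetilde g}\circ d\varphi_e=d\varphi_g\circ dL_g$ and then applies this to $X$, whereas you unwind the same relation along the exponential curve $t\mapsto g\operatorname{exp}_G(tX)$ and differentiate at $t=0$; these are two phrasings of the same computation, and your additional explicit coordinate check and remark on independence of the lift are fine but not needed.
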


\begin{proof}
For any $X\in \widetilde{\mathfrak{g}}$ and any $\widetilde{g},\widetilde{g}^{\prime}\in \widetilde{G}$ where $\widetilde{g}=\varphi(g)$ and $\widetilde{g}^{\prime}=\varphi(g^{\prime})$ for some $g,g^{\prime}\in G$, we have $L_{\widetilde{g}}\widetilde{g}^{\prime}=L_{\widetilde{g}}\varphi(g^{\prime})=\varphi(g)\varphi(g^{\prime})=\varphi(gg^{\prime})=\varphi(L_gg^{\prime})$, as $\varphi$ is a group homomorphism. This means $L_{\widetilde{g}} \circ \varphi=\varphi \circ L_g$ for any $\widetilde{g}=\varphi(g)$. Differentiating at the identity $e$, we have $dL_{\widetilde{g}} \circ d\varphi_e=d\varphi_g \circ dL_g$. For any $X \in \widetilde{\mathfrak{g}}$, we have
\begin{align*}
 \widetilde{X} \left(\widetilde{g}\right) & =\left(dL_{\widetilde{g}} (X)\right)(\widetilde{g})
\\
&
=\left(\left( d\varphi_g \circ dL_g \circ (d\varphi_e)^{-1}\right) (X)\right)(\widetilde{g})
\\
&
=\left(\left( d\varphi_g \circ dL_g\right) (X)\right)(\widetilde{g})
\\
&
=\left(d\varphi_g (\widetilde{X})\right)(\widetilde{g}).
\end{align*}
The third equality is by that $(d\varphi_e)^{-1}$ is an identity map on $W\times \mathbb{R}$. Therefore, \eqref{eqn.VectorRelation1} and \eqref{eqn.VectorRelation2} hold. 
\end{proof}

\subsection{Distance on the Cameron-Martin subgroup}
\label{s.length}

The sub-Riemannian distance on $\widetilde{G}_{CM}$ can be defined similarly to how it is done on $G_{CM}$.

\begin{notation}[Horizontal distances on $G_{CM}$ and $\widetilde{G}_{CM}$]
\label{n.length} Below we define several distances on Cameron-Martin subgroups.

\begin{enumerate}
\item For any $x=(A,a)\in \mathfrak{g}_{CM} \cong \widetilde{\mathfrak{g}}_{CM}$, let
\[
|x|_{\mathfrak{g}_{CM}}^2 : = \|A\|_H^2 + \vert a\vert^2
\]
and 
\[
|x|_{\widetilde{\mathfrak{g}}_{CM}}^2 : = \|A\|_H^2 + \vert a\vert^2.
\]
The \emph{length} of a $C^1$-path $\sigma:[a,b]\rightarrow
G_{CM}$ is defined as
\[
\ell(\sigma)
	:= \int_a^b |dL_{(\sigma(s))^{-1}}\dot{\sigma}(s)|_{\mathfrak{g}_{CM}} \,ds.
\]
The \emph{length} of a $C^1$-path $\gamma:[a,b]\rightarrow
\widetilde{G}_{CM}$ is defined as
\[
\widetilde{\ell}(\gamma)
	:= \int_a^b |dL_{(\gamma(s))^{-1}}\dot{\gamma}(s)|_{\widetilde{\mathfrak{g}}_{CM}} \,ds.
\]

\item \label{i.2}
A $C^1$-path $\sigma:[a,b]\rightarrow G_{CM}$ is  \emph{horizontal} if
$L_{\left(\sigma(t)\right)^{-1}}\dot{\sigma}(t)\in H\times\{0\}$
for a.e.~$t$. A $C^1$-path $\gamma:[a,b]\rightarrow \widetilde{G}_{CM}$ is {\em horizontal} if
$dL_{(\gamma(s))^{-1}}\dot{\gamma}(t)\in H\times\{0\}$
for a.e.~$t$.   Let $C^{1,h}_{CM}$ and $\widetilde{C}^{1,h}_{CM}$ denote the set of horizontal paths
$G_{CM}$ and $\widetilde{G}_{CM}$ respectively.

\item The {\em horizontal distance} between $x,y\in G_{CM}$ is defined by
\[ d(x,y) := \inf\{\ell(\sigma): \sigma\in C^{1,h}_{CM} \text{ such
    that } \sigma(0)=x \text{ and } \sigma(1)=y \}. \]
The {\em horizontal distance} between $\widetilde{x},\widetilde{y}\in \widetilde{G}_{CM}$ is defined by
\[ \widetilde{d}(\widetilde{x},\widetilde{y}) := \inf\{\widetilde{\ell}(\gamma): \sigma\in \widetilde{C}^{1,h}_{CM} \text{ such
    that } \gamma(0)=\widetilde{x} \text{ and } \gamma(1)=\widetilde{y} \}. \]
\end{enumerate}
\end{notation}

\begin{remark}
The topology induced by this distance $d$ is equivalent to the topology induced by a homogeneous norm on $G$ (see \cite[Proposition 2.17, Proposition 2.18]{GordinaMelcher2013} for details).
\end{remark}

Now we explore the connection between $d$ and $\widetilde{d}$.

\begin{proposition}
For any $\widetilde{g},\widetilde{g}^{\prime} \in \widetilde{G}_{CM}$,
\begin{align}
\widetilde{d}(\widetilde{g},\widetilde{g}^{\prime}) \leqslant d(g,g^{\prime})
\end{align}
where $\widetilde{g}=\varphi(g)$ and $\widetilde{g}^{\prime}=\varphi(g^{\prime})$ for some $g,g^{\prime} \in G_{CM}$.
\end{proposition}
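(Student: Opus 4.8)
The plan is to show that any horizontal path in $G_{CM}$ projects under $\varphi$ to a horizontal path in $\widetilde{G}_{CM}$ of the same length, so that the infimum defining $\widetilde{d}$ is taken over a set at least as large as the image of the set defining $d$. Concretely, given $\sigma \in C^{1,h}_{CM}$ with $\sigma(0) = g$ and $\sigma(1) = g'$, set $\gamma := \varphi_{CM} \circ \sigma$. Since $\varphi_{CM}$ is smooth, $\gamma$ is a $C^1$-path in $\widetilde{G}_{CM}$ with $\gamma(0) = \widetilde{g}$ and $\gamma(1) = \widetilde{g}'$, so it is admissible in the infimum for $\widetilde{d}(\widetilde{g}, \widetilde{g}')$. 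It then suffices to prove $\widetilde\ell(\gamma) = \ell(\sigma)$, which gives $\widetilde d(\widetilde g, \widetilde g') \leqslant \widetilde\ell(\gamma) = \ell(\sigma)$, and taking the infimum over $\sigma$ yields the claim.

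The key computation is the identity $dL_{(\gamma(s))^{-1}}\dot\gamma(s) = dL_{(\sigma(s))^{-1}}\dot\sigma(s)$ under the identification $\widetilde{\mathfrak g}_{CM} \cong \mathfrak g_{CM} \cong H \times \mathbb{R}$. To see this, first note $\dot\gamma(s) = d(\varphi_{CM})_{\sigma(s)} \dot\sigma(s)$ by the chain rule. Next, as established in the discussion preceding the proposition, $\varphi_{CM}$ is a group homomorphism, hence $L_{\gamma(s)^{-1}} \circ \varphi_{CM} = \varphi_{CM} \circ L_{\sigma(s)^{-1}}$; differentiating at $\sigma(s)$ gives $dL_{\gamma(s)^{-1}} \circ d(\varphi_{CM})_{\sigma(s)} = d(\varphi_{CM})_e \circ dL_{\sigma(s)^{-1}}$. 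Since $d(\varphi_{CM})_e = \mathrm{Id}$ on $H \times \mathbb{R}$ (as recorded earlier in the excerpt), we obtain $dL_{\gamma(s)^{-1}}\dot\gamma(s) = dL_{\sigma(s)^{-1}}\dot\sigma(s)$ as elements of $H \times \mathbb{R}$. Because $|\cdot|_{\mathfrak g_{CM}}$ and $|\cdot|_{\widetilde{\mathfrak g}_{CM}}$ are the same norm on $H \times \mathbb{R}$, the integrands defining $\ell(\sigma)$ and $\widetilde\ell(\gamma)$ agree pointwise, so $\widetilde\ell(\gamma) = \ell(\sigma)$. The same identity shows $\gamma$ is horizontal whenever $\sigma$ is, since horizontality means $dL_{(\cdot)^{-1}}\dot{(\cdot)} \in H \times \{0\}$, a condition preserved verbatim under this identification.

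I do not expect a serious obstacle here; the statement is essentially a functoriality-of-length argument and all the ingredients ($\varphi_{CM}$ smooth, $\varphi_{CM}$ a homomorphism, $d(\varphi_{CM})_e = \mathrm{Id}$, matching norms on the Lie algebras) are already in place. The only point requiring a little care is making sure the identification of tangent spaces $T_{\widetilde e}\widetilde G_{CM} \cong T_e G_{CM} \cong H \times \mathbb{R}$ is used consistently so that "$d(\varphi_{CM})_e$ is the identity" is applied correctly in the chain $dL_{\gamma(s)^{-1}} \circ d(\varphi_{CM})_{\sigma(s)} = d(\varphi_{CM})_e \circ dL_{\sigma(s)^{-1}}$; this is bookkeeping rather than a genuine difficulty. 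One should also note that the inequality (rather than equality) in the statement reflects that $\widetilde G_{CM}$ may a priori admit horizontal paths between $\widetilde g$ and $\widetilde g'$ that do not lift to horizontal paths between $g$ and $g'$ in $G_{CM}$ — those extra competitors can only lower the infimum — so no reverse inequality is claimed or needed.
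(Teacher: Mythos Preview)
Your proposal is correct and follows essentially the same approach as the paper: project a horizontal path $\sigma$ in $G_{CM}$ via the quotient map, use the homomorphism identity $L_{\gamma(s)^{-1}}\circ\varphi_{CM}=\varphi_{CM}\circ L_{\sigma(s)^{-1}}$ together with $d(\varphi_{CM})_e=\mathrm{Id}$ to identify the left-logarithmic derivatives, conclude equality of lengths, and take the infimum. The only cosmetic difference is that the paper phrases the norm-preservation step as ``$d\varphi_e|_H$ is an isometry on $H$'' (using horizontality so the tangent lies in $H\times\{0\}$), whereas you invoke the stronger fact that $d(\varphi_{CM})_e=\mathrm{Id}$ and the two Lie-algebra norms coincide on all of $H\times\mathbb{R}$; both are valid here.
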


\begin{proof}
Given any $\widetilde{g},\widetilde{g}^{\prime} \in \widetilde{G}_{CM}$, we take arbitrary $g,g^{\prime} \in G_{CM}$ with $\widetilde{g}=\varphi(g)$ and $\widetilde{g}^{\prime}=\varphi(g^{\prime})$. For any $\sigma \in C^{1,h}_{CM}$ with $\sigma(0)=g$ and $\sigma(1)=g^{\prime}$, we have $\varphi \circ \sigma \in \widetilde{C}^{1,h}_{CM}$ with $(\varphi \circ \sigma)(0)=\widetilde{g}$ and $(\varphi \circ \sigma)(1)=\widetilde{g}^{\prime}$. Also,
\begin{align*}
\vert dL_{((\varphi \circ \sigma)(s))^{-1}} \dot{(\varphi \circ \sigma)}(s)\vert_{\widetilde{\mathfrak{g}}_{CM}} & = \vert \left(d\varphi_e \circ dL_{(\sigma(t))^{-1}} \right) \dot{\sigma}(s) \vert_{\widetilde{\mathfrak{g}}_{CM}}
\\
&
=\vert dL_{(\sigma(t))^{-1}} \dot{\sigma}(s) \vert_{\mathfrak{g}_{CM}}.
\end{align*}
The second equality holds as $dL_{(\sigma(t))^{-1}} \dot{\sigma}(s) \in H$ and $d\varphi_e|_{H}$ is an isometry on $H$. In this way, we have $\widetilde{l}(\varphi \circ \sigma) \leqslant l(\sigma)$ and thus,
\begin{align*}
\widetilde{d}(\widetilde{g},\widetilde{g}^{\prime}) & \leqslant \inf\{\widetilde{l}(\varphi \circ \sigma):\sigma\in C^{1,h}_{CM} \text{ such
    that } \sigma(0)=g \text{ and } \sigma(1)=g^{\prime}\} 
    \\
    &
    \leqslant \inf\{l(\sigma):\sigma\in C^{1,h}_{CM} \text{ such
    that } \sigma(0)=g \text{ and } \sigma(1)=g^{\prime} \}
    \\
    &
    =d(g,g^{\prime}).
\end{align*}
\end{proof}

\subsection{Sub-Laplacian and horizontal gradient}

We start by defining the sub-Laplacian and the horizontal gradient on $\widetilde{G}$, and then describe their relation to those on $G$.

\begin{definition}
\label{d.cyl}
A function $f:G\rightarrow\mathbb{C}$ is a \emph{(smooth) cylinder function}
if there exist  some $P\in\mathrm{Proj}(W)$
and some (smooth) function $F:G_P\rightarrow\mathbb{R}$ such that $f=F\circ\pi_P$.  We denote by $\mathcal{C}$ the space of smooth cylinder functions on $G$. A \emph{cylinder polynomial} is a cylinder function, $f=F\circ \pi_{P}:G \rightarrow \mathbb{C}$, where $P\in\operatorname*{Proj}(W)$ and $F$ is a real polynomial function on $G_P$. 
\end{definition}

\begin{definition}
We say that $f:\widetilde{G} \rightarrow \mathbb{R}$ is a \emph{(smooth) cylinder function} if there exists some $P\in\mathrm{Proj}(W)$ and some (smooth) function $F:\widetilde{G}_p\rightarrow \mathbb{R}$ such that $f=F\circ \widetilde{\pi}_P$. We say that $f\in \mathcal{FC}^{\infty}_c(\widetilde{G})$ if there exists some $P\in\mathrm{Proj}(W)$ and some $F\in C^{\infty}_c(\widetilde{G}_P)$  such that $f=F\circ \widetilde{\pi}_P$.
\end{definition}

\begin{definition}
Let $\left\{  e_{j}\right\}_{j=1}^{\infty}$ be an orthonormal
basis for $H$. Given any smooth cylinder function $f:G\rightarrow
\mathbb{R}$, we define the \emph{sub-Laplacian} $L_H^G$ by
\begin{align} \label{SubellipticLaplacian}
L_H^Gf(g)  :=\sum_{j=1}^{\infty}\left[  \widetilde{\left(
e_{j},0\right)  }^{2}f\right]  (g).
\end{align}
for any $g\in G$. Given any smooth cylinder function $f:\widetilde{G}\rightarrow
\mathbb{R}$, we define the \emph{sub-Laplacian} $L_H^{\widetilde{G}}$ on $\widetilde{G}$ is
\begin{align*}
(L_H^{\widetilde{G}}f)(\widetilde{g})=\sum_{j=1}^{\infty} \left[\widetilde{\left(
e_{j},0\right) }^{2} f\right](\widetilde{g})
\end{align*}
for any $\widetilde{g} \in \widetilde{G}$.
\end{definition}

\begin{notation}
We denote
\begin{align*}
& \mathcal{A}:=\left\{f\in \mathcal{C}: f \text{ and }  Xf  \text{ are polynomially bounded } \right.
\\
& \left. \text{ for any left-invariant vector field }X  \text{ on }  G
\right\}.
\end{align*}
\end{notation}

\begin{definition}
For any $u \in \mathcal{A}$, define the \emph{horizontal gradient} $\operatorname{grad}^G_Hu:G \rightarrow H$ of $u$ by
\begin{align} \label{eqn.HorizontalGradientInfinite}
\langle \operatorname{grad}^G_Hu,h\rangle_H=\widetilde{\left(h,0\right)}u
\end{align}
for any $h\in H$. For any $f\in \mathcal{FC}^{\infty}_c(\widetilde{G})$, the \emph{horizontal gradient} $\operatorname{grad}^{\widetilde{G}}_Hf:\widetilde{G} \rightarrow H$ of $f$ on $\widetilde{G}$ is
\begin{align} \label{eqn.HorizontalGradientInfiniteReduced}
\langle \operatorname{grad}^{\widetilde{G}}_Hf,h\rangle_H=\widetilde{\left(h,0\right)}f
\end{align}
for any $h\in H$.

\end{definition}

Based on the definition of the sub-Laplacian and the horizontal gradient on $\widetilde{G}$, we obtain the following proposition by direct computation.

\begin{proposition}
For any $f\in \mathcal{FC}^{\infty}_c(\widetilde{G})$, we have  
\begin{align} \label{eqn.GradientRelation}
\langle \operatorname{grad}^{\widetilde{G}}_Hf,\operatorname{grad}^{\widetilde{G}}_Hf\rangle_H \circ \varphi=\langle \operatorname{grad}^G_H (f\circ \varphi),\operatorname{grad}^G_H(f \circ \varphi) \rangle_H
\end{align}
and
\begin{align} \label{eqn.LaplacianRelation}
\left(L_{H}^{\widetilde{G}} f\right) \circ \varphi=L^{G}_{H}\left(f \circ \varphi\right).
\end{align}
\end{proposition}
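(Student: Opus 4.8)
The plan is to reduce both identities to the single key relation $(\widetilde{X}f)\circ\varphi = \widetilde{X}(f\circ\varphi)$ from the previous lemma, applied to horizontal left-invariant vector fields $\widetilde{(e_j,0)}$ where $\{e_j\}$ is an orthonormal basis for $H$. Recall that for $f\in\mathcal{FC}^\infty_c(\widetilde{G})$ the composition $f\circ\varphi$ is a smooth cylinder function on $G$ (indeed if $f = F\circ\widetilde{\pi}_P$ then $f\circ\varphi = F\circ\widetilde{\pi}_P\circ\varphi = F\circ\varphi_P\circ\pi_P$ by the commuting diagram \eqref{ProjectionDiagram}), so $f\circ\varphi$ lies in $\mathcal{A}$ and all the sums below are genuinely finite sums over the finite-dimensional block $PW$; this justifies all interchanges of sums and compositions without convergence issues.

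First I would prove \eqref{eqn.GradientRelation}. By the definition \eqref{eqn.HorizontalGradientInfiniteReduced} of the horizontal gradient on $\widetilde{G}$ and Parseval's identity in $H$,
\begin{align*}
\langle \operatorname{grad}^{\widetilde{G}}_Hf,\operatorname{grad}^{\widetilde{G}}_Hf\rangle_H
= \sum_{j=1}^{\infty}\left(\langle \operatorname{grad}^{\widetilde{G}}_Hf, e_j\rangle_H\right)^2
= \sum_{j=1}^{\infty}\left(\widetilde{(e_j,0)}f\right)^2.
\end{align*}
Composing with $\varphi$ and applying \eqref{eqn.VectorRelation2} termwise gives
\begin{align*}
\langle \operatorname{grad}^{\widetilde{G}}_Hf,\operatorname{grad}^{\widetilde{G}}_Hf\rangle_H\circ\varphi
= \sum_{j=1}^{\infty}\left(\left(\widetilde{(e_j,0)}f\right)\circ\varphi\right)^2
= \sum_{j=1}^{\infty}\left(\widetilde{(e_j,0)}(f\circ\varphi)\right)^2
= \langle \operatorname{grad}^G_H(f\circ\varphi),\operatorname{grad}^G_H(f\circ\varphi)\rangle_H,
\end{align*}
where the last equality is again Parseval together with \eqref{eqn.HorizontalGradientInfinite}.

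Next I would prove \eqref{eqn.LaplacianRelation} in the same spirit. By the definition of $L_H^{\widetilde{G}}$ and \eqref{eqn.VectorRelation2} applied twice (once to $f$, using that $\widetilde{(e_j,0)}f$ is again smooth so the lemma applies to it as well), for any $g\in G$ with $\widetilde{g}=\varphi(g)$,
\begin{align*}
\left(L_H^{\widetilde{G}}f\right)(\varphi(g))
= \sum_{j=1}^{\infty}\left[\widetilde{(e_j,0)}^{\,2}f\right](\varphi(g))
= \sum_{j=1}^{\infty}\left(\widetilde{(e_j,0)}\left(\widetilde{(e_j,0)}f\right)\right)\circ\varphi\,(g)
= \sum_{j=1}^{\infty}\widetilde{(e_j,0)}\left(\left(\widetilde{(e_j,0)}f\right)\circ\varphi\right)(g),
\end{align*}
and applying \eqref{eqn.VectorRelation2} once more inside,
\begin{align*}
\left(L_H^{\widetilde{G}}f\right)\circ\varphi
= \sum_{j=1}^{\infty}\widetilde{(e_j,0)}^{\,2}(f\circ\varphi)
= L_H^G(f\circ\varphi).
\end{align*}

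The only genuine subtlety — and the step I would be most careful about — is making sure the lemma \eqref{eqn.VectorRelation2} is legitimately applicable to $\widetilde{(e_j,0)}f$ and not just to $f$: one needs that $\widetilde{(e_j,0)}f\in C^\infty(\widetilde{G})$, which follows because $f$ is a smooth cylinder function and left-invariant differentiation preserves smoothness and the cylinder property (the vector field $\widetilde{(e_j,0)}$ only differentiates in the $PW$-direction for $P$ large enough to contain $e_j$). Given that, everything is a finite-dimensional computation on $\widetilde{G}_P$ pushed along $\varphi_P$, and the identities follow. I would also remark that the same argument works verbatim at the level of the Cameron-Martin subgroups, replacing $\varphi$ by $\varphi_{CM}$, since $d\varphi_e|_H$ and $d(\varphi_{CM})_e|_H$ are both isometries of $H$.
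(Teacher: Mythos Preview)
Your proof is correct and follows essentially the same approach as the paper: both arguments expand the gradient squared and the sub-Laplacian in an orthonormal basis $\{e_j\}$ and then push each term through the key relation $(\widetilde{X}f)\circ\varphi=\widetilde{X}(f\circ\varphi)$ from \eqref{eqn.VectorRelation2}. Your write-up is in fact slightly more careful than the paper's in flagging that the sums are finite (via the cylinder structure and the commuting diagram \eqref{ProjectionDiagram}) and in explicitly justifying the second application of \eqref{eqn.VectorRelation2} to $\widetilde{(e_j,0)}f$; the only cosmetic inaccuracy is the remark that $\widetilde{(e_j,0)}$ ``only differentiates in the $PW$-direction'' --- because of the group law it also hits the center --- but your conclusion that the result remains a smooth cylinder function is nonetheless correct.
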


\begin{proof}
Let $\left\{  e_{j}\right\}_{j=1}^{\infty}$ be an orthonormal
basis for $H$. By the definition of the horizontal gradient on $G$ and $\widetilde{G}$, we have
\begin{align*}
\langle \operatorname{grad}^G_Hu, \operatorname{grad}^G_Hu\rangle =\sum_{j=1}^{\infty}\left(  \widetilde{\left(
		e_{j},0\right)^2  }u\right) .
\end{align*}
for any $u\in \mathcal{A}$ and
\begin{align*}
\langle \operatorname{grad}^{\widetilde{G}}_Hf,\operatorname{grad}^{\widetilde{G}}_Hf\rangle_H=\sum_{j=1}^{\infty} \left(\widetilde{\left(
e_{j},0\right) }f\right)^2
\end{align*}
for any $f\in \mathcal{FC}^{\infty}_c(\widetilde{G})$. In addition, for any $f\in \mathcal{FC}^{\infty}_c(\widetilde{G})$ and any left-invariant vector field $X$ on $G$, we have $(X(f\circ \varphi))(g)=\left(((d\varphi_gX)f)\circ \varphi
\right)(g)$ for any $g\in G$ by \eqref{eqn.VectorRelation1}, so $f\circ \varphi \in \mathcal{A}$. In this way, by \eqref{eqn.VectorRelation1} and \eqref{eqn.VectorRelation2}, we see that
\begin{align*} 
& \left( \langle \operatorname{grad}^{\widetilde{G}}_Hf,\operatorname{grad}^{\widetilde{G}}_Hf\rangle_H \circ \varphi\right)(g) =\left( \left(\sum_{j=1}^{\infty} \left(\widetilde{\left(
e_{j},0\right) }f\right)^2\right) \circ \varphi\right)(g)
\\
&
=\sum_{j=1}^{\infty}\left(  d\varphi_g\left(\widetilde{\left(
		e_{j},0\right)}\right)f\right)^2 \left(  \varphi(g)\right)
=\sum_{j=1}^{\infty} \left(\widetilde{\left(
		e_{j},0\right)} (f\circ \varphi)\right)^2(g)
\\
&
=\left(\langle \operatorname{grad}^G_H (f\circ \varphi),\operatorname{grad}^G_H(f \circ \varphi) \rangle_H\right)(g)
\end{align*}
and
\begin{align*}
\label{df.SubLaplacianonQuotientSpace}
& \left( (L_H^{\widetilde{G}}f) \circ \varphi\right)(g) =\left( \left(\sum_{j=1}^{\infty} \widetilde{\left(
e_{j},0\right) }^{2} f \right) \circ \varphi\right) (g)
\\
&
=\sum_{j=1}^{\infty}\left[\left(d\varphi_g  \left(\widetilde{\left(
e_{j},0\right) }\right)\right)^{2}f\right]  (\varphi(g))
=\sum_{j=1}^{\infty} \left(\widetilde{\left(
		e_{j},0\right)}^2 (f\circ \varphi)\right)(g)
\\
&
=\left(L_{H}^G f \circ \varphi\right)(g)
\end{align*}
for $f\in \mathcal{FC}^{\infty}_c(\widetilde{G})$ and any $g\in G$.
\end{proof}

\begin{remark}
By \cite[Proposition 3.17]{BaudoinGordinaMelcher2013}, the sub-Laplacian in \eqref{SubellipticLaplacian} is well-defined since it only depends on $\langle \cdot,\cdot\rangle_H$ and it is independent of the choice of the orthonormal basis. By \eqref{eqn.LaplacianRelation}, we see that the sub-Laplacian $L_{H}^{\widetilde{G}}$ is a well-defined operator on $\widetilde{G}$ as well, which only depends on $\langle \cdot,\cdot\rangle_H$ and is independent of the choice of the orthonormal basis.
\end{remark}

\subsection{Hypoelliptic Brownian motion and hypoelliptic heat kernel measure}\label{sec.HeatKernelMeasureInfinite}

We now define the hypoelliptic Brownian motion and the hypoelliptic heat kernel measure on $\widetilde{G}$. Let $\{B_t\}_{t\ge0}$ be a Brownian motion on $W$ with variance determined by
\[
\mathbb{E}\left[\langle B_s,h\rangle_H \langle B_t,k\rangle_H\right]
    = \langle h,k \rangle_H \min(s,t),
\]
for all $s,t\geqslant 0$ and $h,k\in H_*$. 

\begin{definition}
A \emph{hypoelliptic Brownian motion} on $G$ is the continuous $G$-valued process given by
\[g_t^G = \left( B_t, \frac{1}{2}\int_0^t \omega(B_s,dB_s)\right).
\]
A \emph{hypoelliptic Brownian motion} on $\widetilde{G}$ is the continuous $\widetilde{G}$-valued process given by
\[g_{t}^{\widetilde{G}}= \left( B_t, e^{\frac{i}{2}\int_0^t \omega(B_s,dB_s)}\right).
\]
\end{definition}

Let $g_t^G$ be the horizontal Brownian motion on $G$. We have
\begin{align} \label{eqn.BM.Reduced}
g_{t}^{\widetilde{G}}=\varphi\left(g_t^G\right).
\end{align}

\begin{definition}
\label{df.HeatKernelMeasureInfinite}
We call a family of measures $\{\mu^G_t\}_{t>0}$ on $G$ defined by $\mu^G_t=\mathrm{Law}(g^G_{t})$ for any $t>0$ the {\em hypoelliptic heat kernel measure
} on $G$ at time $t$. We call a family of measures $\{\mu_t^{\widetilde{G}}\}_{t>0}$ on $\widetilde{G}$ defined by $\mu_t^{\widetilde{G}}=\mathrm{Law}(g_{t}^{\widetilde{G}})$ for any $t>0$ the \emph{hypoelliptic heat kernel measure} on $\widetilde{G}$
at time $t$. 
\end{definition}

This definition and \eqref{eqn.BM.Reduced} directly imply the following characterization of the hypoelliptic heat kernel measure on $\widetilde{G}$.

\begin{theorem} \label{thm.HeatKernelMeasureCharacterization}
The measure $\{\mu_t^{\widetilde{G}}\}_{t>0}$ is the pushforward measure $\{\varphi_{\#}\mu_t^G\}_{t>0}$ to $\widetilde{G}$ by $\varphi$. 
\end{theorem}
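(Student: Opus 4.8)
The plan is to unwind the definitions: by Definition~\ref{df.HeatKernelMeasureInfinite}, $\mu_t^{\widetilde{G}}=\mathrm{Law}(g_t^{\widetilde{G}})$, so the statement amounts to showing $\mathrm{Law}(g_t^{\widetilde{G}})=\varphi_{\#}\mathrm{Law}(g_t^G)=\varphi_\#\mu_t^G$ for every $t>0$. This is precisely the general fact that the law of a pushforward random variable is the pushforward of the law, applied to the measurable map $\varphi:G\to\widetilde{G}$ and the $G$-valued random variable $g_t^G$.

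First I would record that $\varphi$ is measurable, which is immediate since $\varphi$ is continuous (indeed smooth) from $G$ to $\widetilde{G}$, hence Borel measurable. Next I would invoke \eqref{eqn.BM.Reduced}, namely $g_t^{\widetilde{G}}=\varphi(g_t^G)$, which identifies the $\widetilde{G}$-valued process as the image under $\varphi$ of the $G$-valued process. Then for any Borel set $A\subseteq\widetilde{G}$,
\begin{align*}
\mu_t^{\widetilde{G}}(A)=\mathbb{P}\bigl(g_t^{\widetilde{G}}\in A\bigr)=\mathbb{P}\bigl(\varphi(g_t^G)\in A\bigr)=\mathbb{P}\bigl(g_t^G\in\varphi^{-1}(A)\bigr)=\mu_t^G\bigl(\varphi^{-1}(A)\bigr)=(\varphi_\#\mu_t^G)(A),
\end{align*}
where the last equality is the definition of the pushforward measure. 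Since $A$ and $t>0$ were arbitrary, this proves $\mu_t^{\widetilde{G}}=\varphi_\#\mu_t^G$ for all $t>0$.

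There is essentially no obstacle here: the only thing to be slightly careful about is that $\varphi^{-1}(A)$ is Borel in $G$ whenever $A$ is Borel in $\widetilde{G}$, which follows from continuity of $\varphi$; and that the hypoelliptic Brownian motion $g_t^G$ is a genuine $G$-valued (Borel measurable) random variable, which is part of its construction as recalled in Section~\ref{sec.HeatKernelMeasureInfinite}. Thus the proof is a one-line consequence of \eqref{eqn.BM.Reduced} together with the elementary transformation rule for laws under measurable maps, exactly as the sentence preceding the statement already indicates.
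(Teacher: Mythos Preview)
Your proof is correct and matches the paper's approach: the paper does not give a separate proof but states that the theorem follows directly from Definition~\ref{df.HeatKernelMeasureInfinite} and \eqref{eqn.BM.Reduced}, which is exactly the argument you have written out in detail.
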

In this way, we can obtain some properties of $\mu_t^{\widetilde{G}}$ inherited from $G$.

\begin{proposition}
For all $t>0$, $\mu_t^{\widetilde{G}}(\widetilde{G}_{CM})=0$. 
\end{proposition}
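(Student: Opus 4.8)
The plan is to reduce the statement to the corresponding fact on the Heisenberg group $G$, namely that $\mu_t^G(G_{CM})=0$, which is known from the construction in \cite{DriverGordina2008, BaudoinGordinaMelcher2013}. First I would invoke Theorem~\ref{thm.HeatKernelMeasureCharacterization}, so that $\mu_t^{\widetilde{G}}=\varphi_{\#}\mu_t^G$. Then by the definition of pushforward,
\begin{align*}
\mu_t^{\widetilde{G}}\left(\widetilde{G}_{CM}\right)=\mu_t^G\left(\varphi^{-1}\left(\widetilde{G}_{CM}\right)\right).
\end{align*}
The key observation is that $\varphi^{-1}(\widetilde{G}_{CM})=H\times\mathbb{R}=G_{CM}$: indeed, using the explicit formula \eqref{eqn.QuotientMap}, $\varphi(w,c)=(w,e^{ci})\in\widetilde{G}_{CM}\cong H\times\mathbb{T}^1$ forces $w\in H$, and conversely every $(w,c)$ with $w\in H$ maps into $\widetilde{G}_{CM}$ since $\Gamma\subset G_{CM}$. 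Hence $\mu_t^{\widetilde{G}}(\widetilde{G}_{CM})=\mu_t^G(G_{CM})$.

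It then remains to recall why $\mu_t^G(G_{CM})=0$. This is the infinite-dimensional analogue of the fact that Brownian motion does not stay in the Cameron-Martin subspace: the projection of $g_t^G$ onto the $W$-component is the Wiener process $B_t$, whose law at time $t$ is (a scaled) Gaussian measure $\mu$ on $W$, and by Theorem~\ref{t.2.3} together with the standard fact that $\mu(H)=0$ for any nondegenerate infinite-dimensional Gaussian measure, we get $\mathbb{P}(B_t\in H)=0$. Since $G_{CM}=H\times\mathbb{R}$ sits inside the preimage $\{(w,c):w\in H\}$ under the first-coordinate projection $G\to W$, we conclude
\begin{align*}
\mu_t^G\left(G_{CM}\right)\leqslant\mathbb{P}\left(B_t\in H\right)=0.
\end{align*}
Combining this with the previous paragraph gives $\mu_t^{\widetilde{G}}(\widetilde{G}_{CM})=0$ for all $t>0$.

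I do not expect any serious obstacle here; the only point requiring a little care is the identification $\varphi^{-1}(\widetilde{G}_{CM})=G_{CM}$, which must use that $\Gamma$ is contained in the center of $G_{CM}$ (so that the quotient $G_{CM}\backslash\Gamma=\widetilde{G}_{CM}$ really is the full fiber over $\widetilde{G}_{CM}$ and no point of $G\setminus G_{CM}$ maps into $\widetilde{G}_{CM}$), and the citation for $\mu_t^G(G_{CM})=0$, which can be taken from \cite{BaudoinGordinaMelcher2013} or \cite{DriverGordina2008}. Everything else is routine measure-theoretic bookkeeping.
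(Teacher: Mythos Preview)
Your proposal is correct and follows essentially the same approach as the paper: reduce to $\mu_t^G(G_{CM})=0$ via the pushforward characterization in Theorem~\ref{thm.HeatKernelMeasureCharacterization} together with the identification $\varphi^{-1}(\widetilde{G}_{CM})=G_{CM}$. The only difference is that the paper cites \cite[Proposition 2.30]{GordinaMelcher2013} for $\mu_t^G(G_{CM})=0$ rather than sketching the argument via the $W$-projection, and it compresses the preimage identification into a single equality.
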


\begin{proof}
By \cite[Proposition 2.30]{GordinaMelcher2013} for all $t>0$ we have $\mu^G_t(G_{CM})=0$. By Theorem~\ref{thm.HeatKernelMeasureCharacterization}, we have
\begin{align*}
\mu_t^{\widetilde{G}}(\widetilde{G}_{CM})=\mu^G_t(G_{CM})=0.
\end{align*}
\end{proof}

\begin{theorem} 
For any $f=F\circ \widetilde{\pi}_P$ with some $P\in\mathrm{Proj}(W)$ and some $F\in C^2(\widetilde{G}_P)$ and any $t>0$, the following heat equation holds
\begin{align*}
& \frac{d}{dt} \int_{\widetilde{G}} fd\mu_t^{\widetilde{G}}=\frac{1}{2}\int_{\widetilde{G}} L_{\mathcal{H}}^{\widetilde{G}}fd\mu_t^{\widetilde{G}},
\\
&
\lim_{t \to 0} \int_{\widetilde{G}} fd\mu_t^{\widetilde{G}}=f(\widetilde{e}).
\end{align*}
\end{theorem}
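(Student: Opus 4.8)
The plan is to transfer the known heat equation on $G$ to $\widetilde{G}$ via the pushforward relation $\mu_t^{\widetilde{G}} = \varphi_{\#}\mu_t^G$ from Theorem~\ref{thm.HeatKernelMeasureCharacterization}, together with the intertwining relation $(L_H^{\widetilde{G}}f)\circ\varphi = L_H^G(f\circ\varphi)$ from \eqref{eqn.LaplacianRelation}. First I would recall (or cite, from \cite{DriverGordina2008} or \cite{BaudoinGordinaMelcher2013}) that for a cylinder function $h = \widetilde{F}\circ\pi_P$ on $G$ with $\widetilde{F}\in C^2(G_P)$, the hypoelliptic heat kernel measure satisfies
\begin{align*}
\frac{d}{dt}\int_G h\, d\mu_t^G = \frac{1}{2}\int_G L_H^G h\, d\mu_t^G, \qquad \lim_{t\to 0}\int_G h\, d\mu_t^G = h(e).
\end{align*}
The key observation is that if $f = F\circ\widetilde{\pi}_P$ with $F\in C^2(\widetilde{G}_P)$, then $f\circ\varphi = F\circ\widetilde{\pi}_P\circ\varphi = F\circ\varphi_P\circ\pi_P$ by the commuting diagram \eqref{ProjectionDiagram}, and $F\circ\varphi_P \in C^2(G_P)$ since $\varphi_P$ is smooth; hence $f\circ\varphi$ is a $C^2$ cylinder function on $G$ to which the heat equation on $G$ applies.

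Next I would run the change of variables. By Theorem~\ref{thm.HeatKernelMeasureCharacterization}, for any bounded (or suitably integrable) measurable $\psi$ on $\widetilde{G}$ we have $\int_{\widetilde{G}}\psi\, d\mu_t^{\widetilde{G}} = \int_G (\psi\circ\varphi)\, d\mu_t^G$. Applying this with $\psi = f$ and then with $\psi = L_H^{\widetilde{G}}f$, and using \eqref{eqn.LaplacianRelation}, gives
\begin{align*}
\frac{d}{dt}\int_{\widetilde{G}} f\, d\mu_t^{\widetilde{G}} = \frac{d}{dt}\int_G (f\circ\varphi)\, d\mu_t^G = \frac{1}{2}\int_G L_H^G(f\circ\varphi)\, d\mu_t^G = \frac{1}{2}\int_G (L_H^{\widetilde{G}}f)\circ\varphi\, d\mu_t^G = \frac{1}{2}\int_{\widetilde{G}} L_H^{\widetilde{G}}f\, d\mu_t^{\widetilde{G}}.
\end{align*}
The initial condition is immediate: $\lim_{t\to 0}\int_{\widetilde{G}} f\, d\mu_t^{\widetilde{G}} = \lim_{t\to 0}\int_G (f\circ\varphi)\, d\mu_t^G = (f\circ\varphi)(e) = f(\varphi(e)) = f(\widetilde{e})$, since $\varphi(e) = \widetilde{e}$.

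The main point requiring care — the only genuine obstacle — is justifying the interchange of $\tfrac{d}{dt}$ with the integral, i.e. that $\tfrac{d}{dt}\int_{\widetilde{G}} f\, d\mu_t^{\widetilde{G}} = \int_G \tfrac{d}{dt}(f\circ\varphi)\,d\mu_t^G$ passes through the pushforward cleanly. This is not really new work: it is exactly the differentiability already established for the heat equation on $G$ applied to the $C^2$ cylinder function $f\circ\varphi$, so it suffices to observe that $f\circ\varphi$ lies in the class of functions for which the $G$-heat equation is known (finite-rank cylinder, $C^2$, with the requisite polynomial bounds on the function and its horizontal derivatives — here automatic since $F\in C^2(\widetilde{G}_P)$ with $\widetilde{G}_P$ finite-dimensional and, if one works with $C_c^\infty$ as elsewhere in the paper, compactly supported). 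Once $f\circ\varphi$ is identified as an admissible test function on $G$, every step above is a formal consequence of Theorem~\ref{thm.HeatKernelMeasureCharacterization} and \eqref{eqn.LaplacianRelation}, and the proof is complete.
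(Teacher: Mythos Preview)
Your proposal is correct and follows essentially the same approach as the paper: identify $f\circ\varphi = (F\circ\varphi_P)\circ\pi_P$ as a $C^2$ cylinder function on $G$ via the commuting diagram \eqref{ProjectionDiagram}, invoke the known heat equation on $G$ (the paper cites \cite[Corollary 5.7]{BaudoinGordinaMelcher2013}), and then push forward using Theorem~\ref{thm.HeatKernelMeasureCharacterization} together with \eqref{eqn.LaplacianRelation}. Your extra remark about the differentiability in $t$ is a reasonable caution, but as you note it is already absorbed into the cited $G$-result.
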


\begin{proof}
For any $f=F\circ \widetilde{\pi}_P$ with some $P\in\mathrm{Proj}(W)$ and some $F\in C^2(\widetilde{G}_P)$, we have $f\circ \varphi=F\circ \widetilde{\pi}_P\circ \varphi=F\circ \varphi_P \circ \pi_P$ by the commutativity of the diagram \eqref{ProjectionDiagram}, so we see that $F\circ \varphi_P \in C^2(G_P)$. From \cite[Corollary 5.7]{BaudoinGordinaMelcher2013}, We know that $\mu^G_t$ satisfies the heat equation on $G$, that is, the following heat equation holds for such $f\circ \varphi=(F\circ \varphi_P) \circ \pi_P$ with $F\circ \varphi_P \in C^2(G_P)$
\begin{align*}
& \frac{d}{dt} \int_{G} (f\circ \varphi) d\mu_t^{G}=\frac{1}{2}\int_{G} L_{\mathcal{H}}^{G}(f\circ \varphi) d\mu_t^{G},
\\
&
\lim_{t \to 0} \int_{G} (f\circ \varphi)d\mu_t^{G}=(f\circ \varphi)(e).
\end{align*}
Using the change of variable formula and Theorem~\ref{thm.HeatKernelMeasureCharacterization}, we can see that $\mu^{\widetilde{G}}_t$ satisfies the above given heat equation on $\widetilde{G}$.
\end{proof}

Moreover, by the change of variable formula, we have
\begin{align} \label{eqn.L^2NormRelation}
\Vert f\Vert_{L^2\left(\widetilde{G}, d\mu_t^{\widetilde{G}}\right)}=\Vert f\circ \varphi\Vert_{L^2\left(G, d\mu_t^{G}\right)}
\end{align}
for any $f\in L^2\left(\widetilde{G}, d\mu_t^{\widetilde{G}}\right)$.

\subsection{Dirichlet forms}
\subsubsection{Dirichlet form on $G$}
Recall some basics on  Dirichlet forms on $G$.

\begin{definition}
For any $u,v\in \mathcal{A}$, we define a \emph{pre-Dirichlet form} by
\begin{align} \label{eqn.PreDirichletForm.Heisenberg}
\mathcal{E}^0_{G}(u,v):=\int_G \langle \operatorname{grad}^G_Hu,\operatorname{grad}^G_Hv \rangle_H d\mu^G_t.  
\end{align}
Then the \emph{Dirichlet norm} $\Vert \cdot \Vert_{\mathcal{E}_{G}}$ on $\mathcal{A}$ is given by
\begin{align}
\Vert f \Vert_{\mathcal{E}_{G}}^2:=\Vert f\Vert^2_{L^2\left(G,\mu_t^G\right)}+\mathcal{E}^0_{G}(f,f)
\end{align}
for any $f\in \mathcal{A}$.
\end{definition}

\begin{remark}
When we restrict $\mathcal{E}^0_{G}$ to the collection of cylinder polynomials, our definition of $\mathcal{E}^0_{G}$ coincides with the one in \cite[(6.11)]{GordinaLuo2022}. 
\end{remark}

Similarly to the proof of \cite[Theorem 6.22]{GordinaLuo2022}, we obtain the following theorem.

\begin{theorem} \label{thm.DirichletHeisenberg}
The pre-Dirichlet form $\mathcal{E}^{0}_G$ is closable and its closure, $\mathcal{E}_G$, is a Dirichlet form on $L^2\left(G, d\mu^G_t\right)$.
\end{theorem}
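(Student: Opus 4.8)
The plan is to imitate the argument behind \cite[Theorem 6.22]{GordinaLuo2022}, which the paper explicitly invokes, so the proof of Theorem~\ref{thm.DirichletHeisenberg} should be short and reduce to (a) closability of $\mathcal{E}^0_G$ on $\mathcal{A}\subset L^2(G,d\mu^G_t)$, and (b) verifying that the closure is a Dirichlet form, i.e. that it is Markovian (contractions under $t\mapsto (0\vee t)\wedge 1$ decrease the form). First I would recall that the pre-Dirichlet form coincides on cylinder polynomials with the one in \cite[(6.11)]{GordinaLuo2022}, as noted in the preceding remark; since cylinder polynomials are dense in $\mathcal{A}$ in the $\|\cdot\|_{\mathcal{E}_G}$-norm (or at least the closures agree), closability of $\mathcal{E}^0_G$ on $\mathcal{A}$ follows directly from closability on cylinder polynomials established there. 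Concretely, closability means: if $f_n\in\mathcal{A}$, $f_n\to 0$ in $L^2(G,d\mu^G_t)$ and $(f_n)$ is $\mathcal{E}^0_G$-Cauchy, then $\mathcal{E}^0_G(f_n,f_n)\to 0$; this is exactly the content transferred from \cite{GordinaLuo2022} via the agreement of the two forms.

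The key ingredient making closability work is an integration-by-parts formula for the vector fields $\widetilde{(e_j,0)}$ against the heat kernel measure $\mu^G_t$, equivalently a quasi-invariance/logarithmic-derivative statement, which is available in \cite{BaudoinGordinaMelcher2013} and \cite{GordinaLuo2022}. I would cite that to identify the adjoint of $\operatorname{grad}^G_H$ on a core, which gives the standard criterion for closability (the operator $\operatorname{grad}^G_H$ is closable as an unbounded operator from $L^2(G,d\mu^G_t)$ to $L^2(G\to H,d\mu^G_t)$), and hence $\mathcal{E}^0_G(f,f)=\|\operatorname{grad}^G_H f\|^2$ is closable. Denote the closure by $\mathcal{E}_G$ with domain $\mathcal{D}(\mathcal{E}_G)$, the $\|\cdot\|_{\mathcal{E}_G}$-completion of $\mathcal{A}$.

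For the Markovian property, I would follow the usual route: it suffices to check that for the normal contraction $\phi(t)=(0\vee t)\wedge 1$ one has $\phi\circ f\in\mathcal{D}(\mathcal{E}_G)$ and $\mathcal{E}_G(\phi\circ f,\phi\circ f)\le \mathcal{E}_G(f,f)$ for $f$ in the core $\mathcal{A}$, then pass to the closure. Since $\operatorname{grad}^G_H$ is a first-order (derivation-type) operator built from the left-invariant vector fields $\widetilde{(e_j,0)}$, the chain rule gives $\operatorname{grad}^G_H(\phi\circ f)=(\phi'\circ f)\,\operatorname{grad}^G_H f$ pointwise (with $|\phi'|\le 1$), at least after a routine smoothing of $\phi$, which yields the pointwise and hence integrated inequality on the quadratic form; a standard approximation argument upgrades this from smooth $f$ to all of $\mathcal{D}(\mathcal{E}_G)$. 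Combined with closability this shows $\mathcal{E}_G$ is a Dirichlet form on $L^2(G,d\mu^G_t)$.

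The main obstacle is the closability step, because in this infinite-dimensional hypoelliptic setting it genuinely rests on the integration-by-parts / quasi-invariance estimates for the hypoelliptic heat kernel measure $\mu^G_t$ under the relevant one-parameter groups generated by $\widetilde{(e_j,0)}$; these are exactly the technical results imported from \cite{BaudoinGordinaMelcher2013, GordinaLuo2022}, and the whole point of the remark preceding the theorem is to line up the present $\mathcal{E}^0_G$ with the form treated there so that one may quote \cite[Theorem 6.22]{GordinaLuo2022} essentially verbatim. The Markovian property, by contrast, is soft once the core $\mathcal{A}$ and the derivation property of $\operatorname{grad}^G_H$ are in hand.
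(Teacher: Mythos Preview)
Your proposal is correct and follows essentially the same route as the paper: closability is lifted directly from \cite[Theorem~6.22]{GordinaLuo2022} via the identification of $\mathcal{E}^0_G$ on cylinder polynomials, and the Markovian property is checked on a core using the chain rule for $\operatorname{grad}^G_H$ applied to a smooth approximation $\phi_\varepsilon$ of the unit contraction (with $|\phi_\varepsilon'|\le 1$), exactly as the paper does by invoking \cite[Exercise~1.2.1, Theorem~3.1.1]{FukushimaOshimaTakedaBook2011}. The only cosmetic difference is that the paper phrases the Markovian step on $\mathcal{FC}^\infty_c(G)$ rather than on $\mathcal{A}$, but the argument is identical.
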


\begin{proof}
Similarly to the proof of \cite[Theorem 6.22]{GordinaLuo2022}, the closability of $\mathcal{E}^{0}_G$ can be obtained. Here we only need to show that closure of $\mathcal{E}^{0}_G$, $\mathcal{E}_G$, is Markovian on $L^2\left(G,d\mu^G_t\right)$. By \cite[Theorem 3.1.1]{FukushimaOshimaTakedaBook2011}, it suffices to show $\mathcal{E}_G$ is Markovian on $\mathcal{FC}^{\infty}_c(G)$, as $\mathcal{E}_G$ is well-defined on $\mathcal{FC}^{\infty}_c(G)$ and $\mathcal{FC}^{\infty}_c(G)$ is dense in $L^2\left(G,d\mu^G_t\right)$. For each $\varepsilon>0$, by  \cite[Exercise 1.2.1]{FukushimaOshimaTakedaBook2011} we can find an infinitely differentiable function $\phi_{\varepsilon}(t)$ such that $\phi_{\varepsilon}(t)=t$ for $t\in [0,1]$, $-\varepsilon\leqslant \phi_{\varepsilon}(t) \leqslant 1+\varepsilon$ for any $t\in \mathbb{R}$ and $0\leqslant \phi_{\varepsilon}(t^{\prime})-\phi_{\varepsilon}(t)\leqslant t^{\prime}-t$ whenever $t<t^{\prime}$. By the chain rule for the Fr\'echet derivative, we have $\langle \operatorname{grad}^G_H (\phi_{\varepsilon}(u)),\operatorname{grad}^G_H(\phi_{\varepsilon}(u)) \rangle_H =\vert \phi_{\varepsilon}^{\prime} (u)\vert^2\langle \operatorname{grad}^G_Hu,\operatorname{grad}^G_Hu \rangle_H $ for any $u\in \mathcal{A}$ and thus
\begin{align*}
\mathcal{E}_{G}(\phi_{\varepsilon}(u),\phi_{\varepsilon}(u)) & =\int_G \vert \phi_{\varepsilon}^{\prime} (u)\vert^2\langle \operatorname{grad}^G_Hu,\operatorname{grad}^G_Hu \rangle_Hd\mu^G_t
\\
&
\leqslant \int_G \langle \operatorname{grad}^G_Hu,\operatorname{grad}^G_Hu \rangle_Hd\mu^G_t,
\end{align*}
as $0 \leqslant \vert \phi^{\prime}_{\varepsilon} \vert \leqslant 1$ (see \cite[Exercise 1.1.2]{FukushimaOshimaTakedaBook2011}). Therefore, $\mathcal{E}_G$, is a Dirichlet form on $L^2\left(G,d\mu^G_t\right)$.
\end{proof}

\begin{notation}
We denote by $\mathcal{D}(\mathcal{E}_G)$ the domain of $\mathcal{E}_G$.
\end{notation}

\begin{remark}
As the collection of cylinder polynomials is a subset of $\mathcal{A}$ and it is also dense in $\mathcal{D}(\mathcal{E}_G) \subseteq L^2\left(G,d\mu^G_t\right)$, we see that the closure of $\mathcal{E}_G^0$ that we obtained from \cite[Theorem 6.22]{GordinaLuo2022} is the same as $\mathcal{E}_G$ here.
\end{remark}

\subsubsection{Dirichlet form on $\widetilde{G}$}

We construct a Dirichlet form with respect to the heat kernel measure $\mu_t^{\widetilde{G}}$ on $\widetilde{G}$ and study its connection to $\mathcal{E}_G$ on $G$.

\begin{definition}
We define a pre-Dirichlet form on $\widetilde{G}$ by
\begin{align}
\mathcal{E}^0_{\widetilde{G}}(f_1, f_2):=\int_{\widetilde{G}} \left\langle \operatorname{grad}^{\widetilde{G}}_H f_1,\operatorname{grad}^{\widetilde{G}}_H f_2\right\rangle_{\mathcal{H}} d\mu_t^{\widetilde{G}}
\end{align}
for any $f_1, f_2\in \mathcal{FC}^{\infty}_c(\widetilde{G})$.
\end{definition}

\begin{notation}
For any $f \in \mathcal{FC}^{\infty}_c(\widetilde{G})$, we denote $\mathcal{E}^0_{\widetilde{G}}(f,f)$ by $\mathcal{E}^0_{\widetilde{G}}(f)$.
\end{notation}

\begin{lemma} \label{lem.DomainRelation}
For any $f_1, f_2 \in \mathcal{FC}^{\infty}_c(\widetilde{G})$, we have $f_1\circ\varphi, f_2\circ\varphi \in \mathcal{D}(\mathcal{E}_{G})$ and
\begin{align} \label{eqn.DirichletFormRelation}
\mathcal{E}^0_{\widetilde{G}}\left(f_1, f_2\right)=\mathcal{E}_{G}\left( f_1\circ\varphi, f_2\circ\varphi\right).
\end{align}
\end{lemma}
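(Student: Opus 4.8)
The plan is to reduce the statement for $\widetilde G$ to the already-established facts about $\mathcal{E}_G$ on $G$, using the pointwise identities \eqref{eqn.GradientRelation} and \eqref{eqn.LaplacianRelation} together with the $L^2$-isometry \eqref{eqn.L^2NormRelation}. First I would check membership: given $f_1,f_2\in\mathcal{FC}^{\infty}_c(\widetilde G)$, say $f_i=F_i\circ\widetilde\pi_{P_i}$ with $F_i\in C^\infty_c(\widetilde G_{P_i})$, the commutativity of the diagram \eqref{ProjectionDiagram} gives $f_i\circ\varphi=(F_i\circ\varphi_{P_i})\circ\pi_{P_i}$, so $f_i\circ\varphi$ is a smooth cylinder function on $G$. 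To place it in $\mathcal{D}(\mathcal{E}_G)$ one must see that $f_i\circ\varphi$ and all its left-invariant derivatives are polynomially bounded, i.e. $f_i\circ\varphi\in\mathcal{A}$; this was in fact noted in the proof of the preceding Proposition (``so $f\circ\varphi\in\mathcal{A}$'') as a consequence of \eqref{eqn.VectorRelation1}, since differentiating $f_i\circ\varphi$ produces $((d\varphi_g X)f_i)\circ\varphi$ and $F_i$ has compact support. Hence $f_i\circ\varphi$ lies in $\mathcal{A}\subseteq\mathcal{D}(\mathcal{E}_G)$.

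Next I would compute the form. By the definition of $\mathcal{E}^0_{\widetilde G}$ and the change-of-variables formula applied to the pushforward $\mu^{\widetilde G}_t=\varphi_{\#}\mu^G_t$ from Theorem~\ref{thm.HeatKernelMeasureCharacterization}, one has
\begin{align*}
\mathcal{E}^0_{\widetilde G}(f_1,f_2)
&=\int_{\widetilde G}\left\langle \operatorname{grad}^{\widetilde G}_H f_1,\operatorname{grad}^{\widetilde G}_H f_2\right\rangle_H d\mu^{\widetilde G}_t\\
&=\int_{G}\left(\left\langle \operatorname{grad}^{\widetilde G}_H f_1,\operatorname{grad}^{\widetilde G}_H f_2\right\rangle_H\circ\varphi\right)d\mu^G_t.
\end{align*}
Now \eqref{eqn.GradientRelation} identifies the integrand, after composition with $\varphi$, with $\langle\operatorname{grad}^G_H(f_1\circ\varphi),\operatorname{grad}^G_H(f_2\circ\varphi)\rangle_H$ — strictly speaking \eqref{eqn.GradientRelation} is stated for a single function, but the polarization identity (or simply rerunning that computation with two functions) gives the bilinear version. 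Therefore the last integral equals $\int_G \langle\operatorname{grad}^G_H(f_1\circ\varphi),\operatorname{grad}^G_H(f_2\circ\varphi)\rangle_H\,d\mu^G_t=\mathcal{E}^0_G(f_1\circ\varphi,f_2\circ\varphi)$, which equals $\mathcal{E}_G(f_1\circ\varphi,f_2\circ\varphi)$ since $\mathcal{E}_G$ restricted to $\mathcal{A}$ (in particular to cylinder functions coming from compactly supported data) agrees with the pre-Dirichlet form $\mathcal{E}^0_G$. This establishes \eqref{eqn.DirichletFormRelation}.

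The one point deserving care — the main (modest) obstacle — is the justification that $f_i\circ\varphi\in\mathcal{D}(\mathcal{E}_G)$, i.e. that it belongs to the domain of the \emph{closure} and not merely to the space $\mathcal{A}$ on which $\mathcal{E}^0_G$ was defined. Since $\mathcal{E}_G$ is the closure of $\mathcal{E}^0_G$ and $\mathcal{A}\subseteq\mathcal{D}(\mathcal{E}^0_G)\subseteq\mathcal{D}(\mathcal{E}_G)$ with the two forms agreeing on $\mathcal{A}$, this is immediate once membership in $\mathcal{A}$ is verified; alternatively one approximates $F_i\circ\varphi_{P_i}$ by cylinder polynomials, which are dense in $\mathcal{D}(\mathcal{E}_G)$ by the Remark following Theorem~\ref{thm.DirichletHeisenberg}. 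I would spell out the $\mathcal{A}$-membership explicitly (boundedness of $f_i\circ\varphi$ is clear since $F_i$ is bounded, and boundedness of $X(f_i\circ\varphi)$ follows from \eqref{eqn.VectorRelation1} together with the fact that $d\varphi_g$ acts on the fixed Lie algebra $W\times\mathbb{R}$ in a controlled way and $F_i$ has compact support, so all derivatives are bounded, hence polynomially bounded). Everything else is the routine change-of-variables and polarization indicated above.
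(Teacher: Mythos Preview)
Your proposal is correct and follows essentially the same route as the paper: verify $f_i\circ\varphi\in\mathcal{A}\subseteq\mathcal{D}(\mathcal{E}_G)$ via \eqref{eqn.VectorRelation1}, then apply change of variables together with \eqref{eqn.GradientRelation} to get the diagonal identity, and finish by polarization. The paper's proof is terser but uses exactly these ingredients in the same order.
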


\begin{proof}
For any $f\in \mathcal{FC}^{\infty}_c(\widetilde{G})$ and any left-invariant vector field $X$ on $G$, we have $(X(f\circ \varphi))(g)=\left(((d\varphi_gX)f)\circ \varphi
\right)(g)$ for any $g\in G$ by \eqref{eqn.VectorRelation1}, so $f\circ \varphi \in \mathcal{A} \subseteq \mathcal{D}(\mathcal{E}_{G})$. Then we can use the change of variable formula and \eqref{eqn.GradientRelation} to obtain 
\begin{align*}
\mathcal{E}^0_{\widetilde{G}}\left(f\right)=\mathcal{E}_{G}\left(f\circ\varphi\right),
\end{align*}
which implies \eqref{eqn.DirichletFormRelation} using polarization.
\end{proof}

By \eqref{eqn.L^2NormRelation} and \eqref{eqn.DirichletFormRelation}, we have 

\begin{align} \label{eqn.DirichletNormRelation}
\Vert f\Vert_{\mathcal{E}^0_{\widetilde{G}}}=\Vert f\circ \varphi\Vert_{\mathcal{E}_{G}}
\end{align}
for any $f\in \mathcal{FC}^{\infty}_c(\widetilde{G})$.

\begin{theorem} \label{thm.ClosabilityRelation}
The bilinear form $\mathcal{E}^0_{\widetilde{G}}$ is closable on $L^2\left(\widetilde{G},d\mu_t^{\widetilde{G}}\right)$.
\end{theorem}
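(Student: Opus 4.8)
The plan is to transport the closability of $\mathcal{E}_G$ on $G$ to $\mathcal{E}^0_{\widetilde{G}}$ on $\widetilde{G}$ using the quotient map $\varphi$ as the link, exploiting the identities \eqref{eqn.L^2NormRelation}, \eqref{eqn.DirichletFormRelation} and \eqref{eqn.DirichletNormRelation}. Recall that a nonnegative symmetric bilinear form $\mathcal{E}^0_{\widetilde{G}}$ with dense domain $\mathcal{FC}^{\infty}_c(\widetilde{G}) \subseteq L^2(\widetilde{G}, d\mu_t^{\widetilde{G}})$ is closable if and only if the following holds: whenever $\{f_n\} \subseteq \mathcal{FC}^{\infty}_c(\widetilde{G})$ is $\|\cdot\|_{\mathcal{E}^0_{\widetilde{G}}}$-Cauchy and $f_n \to 0$ in $L^2(\widetilde{G}, d\mu_t^{\widetilde{G}})$, then $\mathcal{E}^0_{\widetilde{G}}(f_n, f_n) \to 0$.

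First I would take such a sequence $\{f_n\}$ and consider the pulled-back sequence $\{f_n \circ \varphi\}$ on $G$. By Lemma~\ref{lem.DomainRelation}, each $f_n \circ \varphi \in \mathcal{A} \subseteq \mathcal{D}(\mathcal{E}_G)$. By \eqref{eqn.DirichletNormRelation} we have $\|f_n \circ \varphi - f_m \circ \varphi\|_{\mathcal{E}_G} = \|(f_n - f_m)\circ\varphi\|_{\mathcal{E}_G} = \|f_n - f_m\|_{\mathcal{E}^0_{\widetilde{G}}}$, using linearity of pullback, so $\{f_n \circ \varphi\}$ is $\|\cdot\|_{\mathcal{E}_G}$-Cauchy. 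Likewise, by \eqref{eqn.L^2NormRelation}, $\|f_n \circ \varphi\|_{L^2(G, d\mu_t^G)} = \|f_n\|_{L^2(\widetilde{G}, d\mu_t^{\widetilde{G}})} \to 0$, so $f_n \circ \varphi \to 0$ in $L^2(G, d\mu_t^G)$. Since $\mathcal{E}_G$ is a Dirichlet form (in particular closed) by Theorem~\ref{thm.DirichletHeisenberg}, the pre-Dirichlet form $\mathcal{E}^0_G$ is closable; equivalently, $\mathcal{E}_G(f_n \circ \varphi, f_n \circ \varphi) \to 0$. Finally, applying \eqref{eqn.DirichletFormRelation} with $f_1 = f_2 = f_n$ gives $\mathcal{E}^0_{\widetilde{G}}(f_n, f_n) = \mathcal{E}_G(f_n \circ \varphi, f_n \circ \varphi) \to 0$, which is exactly the closability criterion for $\mathcal{E}^0_{\widetilde{G}}$.

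I do not anticipate a serious obstacle here, since all the substantive analytic work — the closability of $\mathcal{E}^0_G$ and the verification of the pullback identities — has already been carried out. The one point requiring a word of care is making sure $\mathcal{FC}^{\infty}_c(\widetilde{G})$ is dense in $L^2(\widetilde{G}, d\mu_t^{\widetilde{G}})$ so that the closability criterion applies to a genuinely densely-defined form; this follows from Theorem~\ref{thm.HeatKernelMeasureCharacterization} together with density of $\mathcal{FC}^{\infty}_c(G)$ in $L^2(G, d\mu_t^G)$ (used already in the proof of Theorem~\ref{thm.DirichletHeisenberg}) and the change of variables formula. A secondary detail is that the closability criterion as I stated it requires the Cauchy property of $\{\mathcal{E}^0_{\widetilde{G}}(f_n, f_n)\}$ (not just of the full $\|\cdot\|_{\mathcal{E}^0_{\widetilde{G}}}$-norms), but this is automatic from the $\|\cdot\|_{\mathcal{E}^0_{\widetilde{G}}}$-Cauchy hypothesis via the triangle inequality for the seminorm $\sqrt{\mathcal{E}^0_{\widetilde{G}}(\cdot,\cdot)}$; in any case the standard formulation of closability only asks for the limit $\mathcal{E}^0_{\widetilde{G}}(f_n,f_n)\to 0$, which is what the argument above delivers.
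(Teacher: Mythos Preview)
Your proposal is correct and follows essentially the same argument as the paper: pull back a sequence in $\mathcal{FC}^{\infty}_c(\widetilde{G})$ via $\varphi$, use \eqref{eqn.L^2NormRelation} and \eqref{eqn.DirichletFormRelation} (equivalently \eqref{eqn.DirichletNormRelation}) to transfer the $L^2$-convergence and the $\mathcal{E}$-Cauchy property to $G$, and then invoke the closedness of $\mathcal{E}_G$ from Theorem~\ref{thm.DirichletHeisenberg} to conclude $\mathcal{E}^0_{\widetilde{G}}(f_n)\to 0$. Your additional remarks on density of $\mathcal{FC}^{\infty}_c(\widetilde{G})$ and on the Cauchy criterion are fine supplementary comments, but the core argument matches the paper's proof line for line.
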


\begin{proof}
The closability of $\mathcal{E}^0_{\widetilde{G}}$ comes from the closedness of $\mathcal{E}_{G}$. Let $\{f_k\}_{k=1}^{\infty}$ be a sequence in $\mathcal{FC}^{\infty}_c(\widetilde{G})$ such that $\Vert f_k\Vert_{L^2\left(\widetilde{G}, d\mu_t^{\widetilde{G}}\right)} \xrightarrow[
]{k\to\infty} 0$ and $\mathcal{E}^0_{\widetilde{G}}(f_k-f_l) \xrightarrow[]{k, l \to\infty}  0$. Then we have $f_k\circ \varphi, f_l\circ \varphi \in \mathcal{D}\left(\mathcal{E}_{G}\right)$ by Lemma~\ref{lem.DomainRelation} with
\begin{align*}
\Vert f_k\circ \varphi\Vert_{L^2\left(G, d\mu_t^{G}\right)}=\Vert f_k\Vert_{L^2\left(\widetilde{G}, d\mu_t^{\widetilde{G}}\right)} \xrightarrow[]{k\to\infty} 0
\end{align*}
by \eqref{eqn.L^2NormRelation} and
\begin{align*}
\mathcal{E}_{G}(f_k\circ\varphi-f_l\circ\varphi)=\mathcal{E}^0_{\widetilde{G}}(f_k-f_l) \xrightarrow[]{k, l \to\infty} 0
\end{align*}
by \eqref{eqn.DirichletFormRelation}. Using that $\mathcal{E}_{G}$ is closed, we see that
\begin{align*}
\mathcal{E}^0_{\widetilde{G}}(f_k)=\mathcal{E}_{G}(f_k\circ\varphi) \xrightarrow[]{k\to\infty} 0,
\end{align*}
and therefore $\mathcal{E}^0_{\widetilde{G}}$ is closable on $L^2\left(\widetilde{G}, d\mu_t^{\widetilde{G}}\right)$.   
\end{proof}

\begin{notation}
We denote the closure of $\mathcal{E}^0_{\widetilde{G}}$ on $L^2\left(\widetilde{G},d\mu_t^{\widetilde{G}}\right)$ by $\mathcal{E}_{\widetilde{G}}$ and its domain by $\mathcal{D}\left(\mathcal{E}_{\widetilde{G}}\right) \subseteq L^2\left(\widetilde{G},d\mu_t^{\widetilde{G}}\right)$.
\end{notation}

\begin{proposition}
The closed bilinear form $\mathcal{E}_{\widetilde{G}}$ is a Dirichlet form on $L^2\left(\widetilde{G},d\mu_t^{\widetilde{G}}\right)$.
\end{proposition}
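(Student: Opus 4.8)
The plan is to show that $\mathcal{E}_{\widetilde{G}}$ is Markovian, since it is already closed by Theorem~\ref{thm.ClosabilityRelation}; the key tool is the transfer of the unit-contraction property from $\mathcal{E}_G$, which is a Dirichlet form by Theorem~\ref{thm.DirichletHeisenberg}. First I would recall that by \cite[Theorem 3.1.1]{FukushimaOshimaTakedaBook2011} it suffices to verify the Markov property on the core $\mathcal{FC}^{\infty}_c(\widetilde{G})$, since this space is dense in $L^2(\widetilde{G}, d\mu_t^{\widetilde{G}})$ and in $\mathcal{D}(\mathcal{E}_{\widetilde{G}})$ with respect to the $\mathcal{E}_{\widetilde{G}}$-norm by construction of the closure. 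So fix $f \in \mathcal{FC}^{\infty}_c(\widetilde{G})$ and, for $\varepsilon > 0$, take the smooth normal contraction $\phi_{\varepsilon}$ as in the proof of Theorem~\ref{thm.DirichletHeisenberg} (using \cite[Exercise 1.2.1]{FukushimaOshimaTakedaBook2011}): $\phi_{\varepsilon}(t) = t$ on $[0,1]$, $-\varepsilon \le \phi_{\varepsilon} \le 1+\varepsilon$, and $0 \le \phi_{\varepsilon}(t') - \phi_{\varepsilon}(t) \le t' - t$ for $t < t'$, so that $0 \le |\phi_{\varepsilon}'| \le 1$.

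Next I would use the chain rule for the Fréchet derivative together with the gradient identity \eqref{eqn.GradientRelation} and the measure pushforward Theorem~\ref{thm.HeatKernelMeasureCharacterization}. Applying the change of variables $\varphi(x) \mapsto \widetilde{x}$, one gets
\begin{align*}
\mathcal{E}^0_{\widetilde{G}}(\phi_{\varepsilon}(f), \phi_{\varepsilon}(f))
&= \mathcal{E}_G(\phi_{\varepsilon}(f)\circ\varphi, \phi_{\varepsilon}(f)\circ\varphi)
= \mathcal{E}_G(\phi_{\varepsilon}(f\circ\varphi), \phi_{\varepsilon}(f\circ\varphi)) \\
&= \int_G |\phi_{\varepsilon}'(f\circ\varphi)|^2 \langle \operatorname{grad}^G_H(f\circ\varphi), \operatorname{grad}^G_H(f\circ\varphi)\rangle_H \, d\mu^G_t
\leqslant \mathcal{E}_G(f\circ\varphi, f\circ\varphi) = \mathcal{E}^0_{\widetilde{G}}(f,f),
\end{align*}
where the first equality uses Lemma~\ref{lem.DomainRelation} (noting $\phi_{\varepsilon}(f) \in \mathcal{FC}^{\infty}_c(\widetilde{G})$ up to an additive constant, which the gradient annihilates; alternatively approximate as needed), the second equality is because $\phi_{\varepsilon}$ commutes with precomposition by $\varphi$, the third is the chain rule as in Theorem~\ref{thm.DirichletHeisenberg}, and the inequality is $0 \le |\phi_{\varepsilon}'| \le 1$. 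One also checks $\|\phi_{\varepsilon}(f)\|_{L^2(\widetilde{G}, d\mu_t^{\widetilde{G}})} = \|\phi_{\varepsilon}(f\circ\varphi)\|_{L^2(G, d\mu_t^G)} < \infty$ via \eqref{eqn.L^2NormRelation}, so $\phi_{\varepsilon}(f) \in \mathcal{D}(\mathcal{E}_{\widetilde{G}})$ and the $\mathcal{E}_{\widetilde{G}}$-norm bound passes to the limit.

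Finally I would let $\varepsilon \to 0$: standard arguments (as in \cite[Theorem 1.4.2(v)]{FukushimaOshimaTakedaBook2011} or the proof of Theorem~\ref{thm.DirichletHeisenberg}) give that $(0 \vee f) \wedge 1 \in \mathcal{D}(\mathcal{E}_{\widetilde{G}})$ with $\mathcal{E}_{\widetilde{G}}((0\vee f)\wedge 1) \leqslant \mathcal{E}_{\widetilde{G}}(f,f)$, which is the Markov property, and hence $\mathcal{E}_{\widetilde{G}}$ is a Dirichlet form. The only mild obstacle is bookkeeping: ensuring $\phi_{\varepsilon}(f)$ (or a suitable truncation) genuinely lies in $\mathcal{FC}^{\infty}_c(\widetilde{G})$ or in $\mathcal{D}(\mathcal{E}_{\widetilde{G}})$ so that Lemma~\ref{lem.DomainRelation} and the change of variables apply verbatim — this is handled exactly as in the proof of Theorem~\ref{thm.DirichletHeisenberg}, transported through $\varphi$, so no new difficulty arises. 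In fact, the cleanest route is simply to observe that $\varphi$ is a quasi-homeomorphism of quasi-regular Dirichlet spaces (by \eqref{eqn.L^2NormRelation} and \eqref{eqn.DirichletFormRelation}), and the Markov property is preserved under such maps, but giving the direct verification above keeps the argument self-contained.
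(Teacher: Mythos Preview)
Your proposal is correct and follows essentially the same route as the paper: both reduce to checking the Markov property on the core $\mathcal{FC}^{\infty}_c(\widetilde{G})$ via \cite[Theorem 3.1.1]{FukushimaOshimaTakedaBook2011}, pull back through $\varphi$ using Lemma~\ref{lem.DomainRelation}/\eqref{eqn.DirichletFormRelation}, and invoke the Markovian property already established for $\mathcal{E}_G$ with the same smooth contraction $\phi_\varepsilon$. Your worry about $\phi_\varepsilon(f)$ lying in $\mathcal{FC}^{\infty}_c(\widetilde{G})$ ``up to an additive constant'' is unnecessary, since $\phi_\varepsilon(0)=0$ by the requirement $\phi_\varepsilon(t)=t$ on $[0,1]$, so $\phi_\varepsilon(f)$ is genuinely in $\mathcal{FC}^{\infty}_c(\widetilde{G})$ and the paper's (and your) argument goes through without any approximation step.
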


\begin{proof}
It suffices to show that $\mathcal{E}^0_{\widetilde{G}}$ is Markovian. Then, by \cite[Theorem 3.1.1]{FukushimaOshimaTakedaBook2011}, the closability of $\mathcal{E}^0_{\widetilde{G}}$ implies that $\mathcal{E}_{\widetilde{G}}$ is Markovian too. For each $\varepsilon>0$, by  \cite[Exercise 1.2.1]{FukushimaOshimaTakedaBook2011} we can find an infinitely differentiable function $\phi_{\varepsilon}(t)$ such that $\phi_{\varepsilon}(t)=t$ for $t\in [0,1]$, $-\varepsilon\leqslant \phi_{\varepsilon}(t) \leqslant 1+\varepsilon$ for any $t\in \mathbb{R}$ and $0\leqslant \phi_{\varepsilon}(t^{\prime})-\phi_{\varepsilon}(t)\leqslant t^{\prime}-t$ whenever $t<t^{\prime}$. For any $f\in \mathcal{FC}^{\infty}_c(\widetilde{G})$, we have $\phi_{\varepsilon}(f)\in \mathcal{FC}^{\infty}_c(\widetilde{G})$ and $\phi_{\varepsilon}(f\circ \varphi)=\phi_{\varepsilon}(f)\circ \varphi \in \mathcal{A} \subseteq \mathcal{D}\left(\mathcal{E}_{G}\right)$. Then, the Markovian property of $\mathcal{E}_{G}$ together with \eqref{eqn.DirichletFormRelation} implies that
\begin{align*}
\mathcal{E}^0_{\widetilde{G}}\left(\phi_{\varepsilon}(f)\right) & =\mathcal{E}_{G}\left(\phi_{\varepsilon}(f\circ \varphi)\right)
\\
&
\leqslant \mathcal{E}_{G}(f\circ \varphi),
\end{align*}
so $\mathcal{E}^0_{\widetilde{G}}$ is Markovian and thus $\mathcal{E}_{\widetilde{G}}$ is Markovian. Thus $\mathcal{E}_{\widetilde{G}}$ is a Dirichlet form.
\end{proof}

We give the connection between $\mathcal{E}_{\widetilde{G}}$ and $\mathcal{E}_{G}$ in the following theorem. The next theorem shows that the quotient map $\varphi$ is a quasi-homeomorphism between two (quasi-regular) Dirichlet forms.

\begin{theorem}[Dirichlet forms under the quotient map] \label{thm.DirichletFormRelation}
If $f\in \mathcal{D}\left(\mathcal{E}_{\widetilde{G}}\right)$, then $f\circ \varphi \in \mathcal{D}\left(\mathcal{E}_{G}\right)$. In addition, we have
\begin{align} \label{eqn.DirichletFormRelationII}
\mathcal{E}_{\widetilde{G}}\left(f_1,f_2\right)=\mathcal{E}_{G}\left(f_1\circ\varphi, f_2\circ\varphi\right)
\end{align}
for any $f_1, f_2 \in \mathcal{D}\left(\mathcal{E}_{\widetilde{G}}\right)$.
\end{theorem}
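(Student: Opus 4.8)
The plan is to pass from the core relation on cylinder functions, Lemma~\ref{lem.DomainRelation}, to the full closure by a standard density-and-limit argument, using that $\mathcal{E}_G$ is closed. Concretely, given $f\in\mathcal{D}(\mathcal{E}_{\widetilde{G}})$, by definition of the closure there is a sequence $\{f_k\}\subset\mathcal{FC}^\infty_c(\widetilde{G})$ with $\Vert f_k-f\Vert_{L^2(\widetilde{G},d\mu_t^{\widetilde{G}})}\to 0$ and $\mathcal{E}^0_{\widetilde{G}}(f_k-f_l)\to 0$ as $k,l\to\infty$ (equivalently, $\{f_k\}$ is $\Vert\cdot\Vert_{\mathcal{E}^0_{\widetilde{G}}}$-Cauchy and $L^2$-converges to $f$). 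First I would transfer this to the $G$-side: by Lemma~\ref{lem.DomainRelation} each $f_k\circ\varphi\in\mathcal{D}(\mathcal{E}_G)$, and by the identities \eqref{eqn.L^2NormRelation} and \eqref{eqn.DirichletNormRelation} the sequence $\{f_k\circ\varphi\}$ is Cauchy in $\Vert\cdot\Vert_{\mathcal{E}_G}$ and $L^2(G,d\mu_t^G)$-converges to $f\circ\varphi$ (the $L^2$-limit is $f\circ\varphi$ because $\Vert f_k\circ\varphi - f\circ\varphi\Vert_{L^2(G,d\mu_t^G)} = \Vert f_k - f\Vert_{L^2(\widetilde{G},d\mu_t^{\widetilde{G}})}\to 0$ by the change of variables in Theorem~\ref{thm.HeatKernelMeasureCharacterization}). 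Since $\mathcal{E}_G$ is closed, this forces $f\circ\varphi\in\mathcal{D}(\mathcal{E}_G)$ and $\mathcal{E}_G(f_k\circ\varphi - f\circ\varphi)\to 0$.

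Next I would establish \eqref{eqn.DirichletFormRelationII}. By bilinearity it suffices to prove it for $f_1=f_2=f$ and then polarize. For $f\in\mathcal{D}(\mathcal{E}_{\widetilde{G}})$ with approximating sequence $\{f_k\}$ as above, we have $\mathcal{E}_{\widetilde{G}}(f)=\lim_k \mathcal{E}^0_{\widetilde{G}}(f_k)$ by definition of the closure, while $\mathcal{E}_G(f\circ\varphi)=\lim_k \mathcal{E}_G(f_k\circ\varphi)$ by the previous paragraph; but $\mathcal{E}^0_{\widetilde{G}}(f_k)=\mathcal{E}_G(f_k\circ\varphi)$ for every $k$ by Lemma~\ref{lem.DomainRelation}, so the two limits agree. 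Polarizing, or alternatively running the same argument with $f_1+f_2$ in place of $f$, gives \eqref{eqn.DirichletFormRelationII} for all $f_1,f_2\in\mathcal{D}(\mathcal{E}_{\widetilde{G}})$.

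The one point requiring a little care — and the main (mild) obstacle — is the compatibility of the two notions of convergence under $\varphi$: namely that $L^2$-convergence $f_k\to f$ on $\widetilde{G}$ really does imply $f_k\circ\varphi\to f\circ\varphi$ in $L^2(G,d\mu_t^G)$ with the \emph{same} limit $f\circ\varphi$, so that the closedness of $\mathcal{E}_G$ may be invoked with the correct limit. This is exactly the content of the change of variables formula \eqref{eqn.L^2NormRelation} (applied to $f_k-f$), so no new input is needed; one only has to note that $f\in L^2(\widetilde{G},d\mu_t^{\widetilde{G}})$ indeed has $f\circ\varphi\in L^2(G,d\mu_t^G)$, again by Theorem~\ref{thm.HeatKernelMeasureCharacterization}. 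I would also remark that this theorem, combined with the measure-pushforward of Theorem~\ref{thm.HeatKernelMeasureCharacterization}, verifies conditions (1) and (3) in the definition of a quasi-homeomorphism, with condition (2) following from the explicit form \eqref{eqn.QuotientMap} of $\varphi$ on the relevant $\mathcal{E}$-nest; thus $\varphi$ is a quasi-homeomorphism and Theorem~\ref{thm.LSI.Quasi-homeomorphism} applies.
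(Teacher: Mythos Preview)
Your proposal is correct and follows essentially the same approach as the paper's proof: approximate $f$ by smooth cylinder functions, transfer the Cauchy property to the $G$-side via \eqref{eqn.L^2NormRelation} and \eqref{eqn.DirichletNormRelation}, invoke closedness of $\mathcal{E}_G$ to place $f\circ\varphi$ in $\mathcal{D}(\mathcal{E}_G)$, and then pass to the limit in Lemma~\ref{lem.DomainRelation} followed by polarization. The only cosmetic difference is that the paper first names the $\mathcal{E}_G$-limit $\tilde{f}$ and then identifies it with $f\circ\varphi$ via uniqueness of the $L^2$-limit, whereas you identify the limit directly; the substance is the same.
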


\begin{proof}
For any $f\in \mathcal{D}\left(\mathcal{E}_{\widetilde{G}}\right)$, there exists a sequence $\{f_k\}_{k=1}^{\infty} \subseteq \mathcal{FC}^{\infty}_c(\widetilde{G})$ such that $f_k\xrightarrow[]{k\to\infty} 
f$ under $\Vert \cdot\Vert_{\mathcal{E}_{\widetilde{G}}}$. Using \eqref{eqn.DirichletNormRelation} and the fact that $\mathcal{E}_{G}$ is closed, we obtain the convergence of the sequence $\{f_k\circ \varphi \}_{k=1}^{\infty}$ in $\mathcal{D}\left(\mathcal{E}_{G}\right)$ under $\Vert \cdot\Vert_{\mathcal{E}_{G}}$. This means that there exists an $\tilde{f}\in \mathcal{D}\left(\mathcal{E}_{G}\right)$ such that $\Vert f_k\circ \varphi \Vert_{\mathcal{E}_{G}} \xrightarrow[]{k\to\infty} \left\Vert \widetilde{f}\right\Vert_{\mathcal{E}_{G}}$. More precisely, we have
\begin{align*}
\Vert f_k\circ \varphi \Vert_{L^2\left(G, d\mu_t^G\right)} \xrightarrow[]{k\to\infty} \left\Vert \widetilde{f}\right\Vert_{L^2\left(G, d\mu_t^G\right)}, \, \mathcal{E}_{G}(f_k\circ \varphi) \xrightarrow[]{k\to\infty} \mathcal{E}_{G}\left(\tilde{f}\right).
\end{align*}
However, by \eqref{eqn.L^2NormRelation} we have 
\begin{align*}
\Vert f_k\circ \varphi \Vert_{L^2\left(G, d\mu_t^G\right)} & =\Vert f_k\Vert_{L^2\left(\widetilde{G}, d\mu_t^{\widetilde{G}}\right)} 
\\
&
\xrightarrow[]{k\to\infty} \Vert f\Vert_{L^2\left(\widetilde{G}, d\mu_t^{\widetilde{G}}\right)}=\Vert f\circ \varphi \Vert_{L^2\left(G, d\mu_t^G\right)}.
\end{align*}
By the uniqueness of the limit in $L^2\left(G, d\mu_t^G\right)$, we see that $f\circ \varphi=\widetilde{f}$ in $L^2\left(G, d\mu_t^G\right)$. Since $\mathcal{D}\left(\mathcal{E}_{G}\right)$ is a subspace of $L^2\left(G, d\mu_t^G\right)$ and $\widetilde{f}\in \mathcal{D}\left(\mathcal{E}_{G}\right)$, we obtain that $f\circ \varphi=\widetilde{f}$ in $\mathcal{D}\left(\mathcal{E}_{G}\right)$.

Now we prove \eqref{eqn.DirichletFormRelationII}. It suffices to prove it when $f_1=f_2 \in \mathcal{D}\left(\mathcal{E}_{\widetilde{G}}\right)$, from which we can obtain \eqref{eqn.DirichletFormRelationII} using polarization. Using the previous convergence of $f_k \xrightarrow[]{k\to\infty} f$ in $\mathcal{D}\left(\mathcal{E}_{\widetilde{G}}\right)$ under $\Vert \cdot\Vert_{\mathcal{E}_{\widetilde{G}}}$ and $f_k\circ \varphi \xrightarrow[]{k\to\infty} f\circ \varphi$ in $\mathcal{D}\left(\mathcal{E}_{G}\right)$ under $\Vert \cdot\Vert_{\mathcal{E}_{G}}$, we have
\begin{align*}
\mathcal{E}_{G}\left(f\circ\varphi\right)=\mathcal{E}_{G}\left(\tilde{f}\right)=\lim_{k\to\infty}\mathcal{E}_{G}(f_k\circ \varphi)=\lim_{k\to\infty}\mathcal{E}_{\widetilde{G}}(f_k)=\mathcal{E}_{\widetilde{G}}(f)
\end{align*}
implying the result.        
\end{proof}

\section{Logarithmic Sobolev inequalities on infinite-dimensional reduced Heisenberg groups} \label{sec.LSI.Reduced}

In this section, we use the fact that Dirichlet spaces on $G$ and $\widetilde{G}$ are quasi-homeomorphic under the quotient map to prove a hypoelliptic logarithmic Sobolev inequality on $\widetilde{G}$. Moreover, we can control the  LSI constant on the quotient space.

We first recall the hypoelliptic logarithmic Sobolev inequality on $G$.

\begin{theorem} [Theorem 6.24 in \cite{GordinaLuo2022}] \label{thm.LSIInfinite.Reduced} 
The logarithmic Sobolev inequality holds on $G$, that is, for any $f\in \mathcal{D}\left(\mathcal{E}_G\right)$ and any $t>0$
\begin{align} \label{LSIInfinite}
\int_{G}f^2\log f^2d\mu^G_t-\left(\int_{G}f^2 d\mu^G_t\right)\log\left(\int_{G}f^2d\mu^G_t\right)
\leqslant C\left(\omega, t\right) \mathcal{E}_G\left(f,f\right),
\end{align}
where the LSI constant can be chosen to be $C\left(\omega,t\right)=Ct$ with some $C>0$ independent of $\omega$ and thus LSI constant is independent of $\omega$.
\end{theorem}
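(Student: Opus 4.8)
The plan is to reduce the infinite-dimensional inequality to a \emph{dimension-free and $\omega$-independent} logarithmic Sobolev inequality for the sub-elliptic heat kernel measures on the finite-dimensional non-isotropic Heisenberg groups $G_P$, $P\in\mathrm{Proj}(W)$, and then to pass to the limit along an increasing sequence of such projections.

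First I would remove the time parameter. The dilations $\delta_r(w,c):=(rw,r^2c)$ are Lie group automorphisms of $G$ (and of each $G_P$), Brownian scaling gives $\mu_t^G=(\delta_{\sqrt t})_{\#}\mu_1^G$, and under $\delta_{\sqrt t}$ the horizontal gradient is multiplied by $\sqrt t$. Hence, replacing $f$ by $f\circ\delta_{\sqrt t}$, the entropy computed against $\mu_1^G$ equals the entropy of $f$ against $\mu_t^G$, while the Dirichlet energy built from $\mu_1^G$ picks up exactly a factor $t$. Consequently an LSI at time $1$ with constant $C$ is equivalent to an LSI at time $t$ with constant $Ct$, which is the asserted form $C(\omega,t)=Ct$; from now on the problem is localized at $t=1$.

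The core step is an LSI for $\mu_1^{G_P}$ with a constant depending neither on $P$ (so it is dimension-free) nor on the symplectic form $\omega$. Since the naive Bakry--\'{E}mery curvature of the sub-Laplacian is $-\infty$, I would not use $\Gamma_2$-calculus directly; instead I would establish a sub-Riemannian gradient estimate of the form $|\nabla_H P^{G_P}_s F|_H^2\leqslant K\,P^{G_P}_s\!\bigl(|\nabla_H F|_H^2\bigr)$ for the sub-elliptic heat semigroup on $G_P$, with $K$ an \emph{absolute} constant, in the spirit of the Heisenberg-group gradient bounds of Driver--Melcher and Melcher \cite{Melcher2009} (an alternative would be the generalized curvature--dimension inequality of Baudoin--Garofalo). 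Feeding this bound into the classical entropy--semigroup interpolation --- differentiate $s\mapsto P^{G_P}_s\bigl((P^{G_P}_{1-s}F)^2\log(P^{G_P}_{1-s}F)^2\bigr)$, integrate over $[0,1]$, and apply the gradient estimate --- yields $\mathrm{Ent}_{\mu_1^{G_P}}(F^2)\leqslant C\,\mathcal{E}_{G_P}(F,F)$ with $C$ depending only on $K$. I expect the main obstacle to be precisely the uniformity of $K$: one must track the proof of the gradient bound --- which rests on a Cameron--Martin / integration-by-parts manipulation for the horizontal Brownian motion coupled to the vertical functional $\tfrac12\int_0^\cdot\omega(B_s,dB_s)$ --- and check that the resulting constant sees neither $\dim PW$ nor the structure constants $\omega(e_i,e_j)$.

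Finally I would transfer the inequality to $G$ by finite-dimensional approximation. Cylinder polynomials are dense in $\mathcal{D}(\mathcal{E}_G)$, so it suffices to prove \eqref{LSIInfinite} at $t=1$ for $f\in\mathcal{A}$. For an increasing sequence $P_n\in\mathrm{Proj}(W)$ with $P_n\to I$ strongly on $H$, one has $P_nB_1\to B_1$ together with the $L^2$-convergence of the vertical stochastic integrals, so the heat kernel measures $\mu_1^{G_{P_n}}$ converge weakly to $\mu_1^G$; moreover for a $P_n$-based cylinder function the energy $\mathcal{E}_G(f,f)$ differs from $\mathcal{E}_{G_{P_n}}$ of the corresponding function on $G_{P_n}$ only through the replacement of $\omega(w,\cdot)$ by $\omega(P_nw,\cdot)$, a discrepancy that vanishes as $n\to\infty$. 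Applying the dimension-free LSI on each $G_{P_n}$ with the common constant $C$ and letting $n\to\infty$ --- using lower semicontinuity of the entropy functional and convergence of the energies --- gives \eqref{LSIInfinite} for every $f\in\mathcal{D}(\mathcal{E}_G)$ with $C(\omega,t)=Ct$, independent of $\omega$. One could alternatively attempt the semigroup interpolation directly on $G$, using the Cameron--Martin quasi-invariance of $\mu_t^G$ and a Malliavin-calculus gradient bound, or argue by a tensorization reduction; in every route the decisive point remains the dimension- and $\omega$-uniformity of the underlying gradient or curvature constant.
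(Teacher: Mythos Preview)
The paper does not supply its own proof of this statement: Theorem~\ref{thm.LSIInfinite.Reduced} is quoted verbatim from \cite[Theorem 6.24]{GordinaLuo2022} and invoked as a black box in Section~\ref{sec.LSI.Reduced}. So there is no in-paper argument to compare against; the relevant comparison is with the method of \cite{GordinaLuo2022}, which the present paper describes (in the introduction) as ``finite-dimensional approximations and tensorization,'' and which, by the Remark following the theorem, delivers exactly the LSI constant of the three-dimensional isotropic Heisenberg group via \cite[Theorem 2.8]{GordinaLuo2022} and \cite{LiHong-Quan2006}.

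Your overall architecture --- reduce to $t=1$ by dilation, prove a dimension-free LSI on each $G_P$, then pass to the limit along $P_n\uparrow I$ --- matches the finite-dimensional approximation scheme of \cite{GordinaLuo2022}. The divergence is in the middle step. You propose to obtain the uniform LSI on $G_P$ from a Driver--Melcher-type $L^2$ gradient bound $|\nabla_H P_s^{G_P}F|^2\leqslant K\,P_s^{G_P}|\nabla_H F|^2$ with $K$ independent of $\dim PW$ and of $\omega$, and you correctly flag this uniformity as the crux. This is a genuine gap as written: the known proofs of such reverse Poincar\'e/gradient bounds on step-two groups (Driver--Melcher, Melcher, Baudoin--Garofalo) produce constants that a priori depend on the structure constants and the dimension, and extracting an absolute $K$ requires an additional argument you have not supplied. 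By contrast, the route actually taken in \cite{GordinaLuo2022} sidesteps this issue entirely: one uses a symplectic normal form to realize the non-isotropic $G_P$ as (a quotient of) a product of three-dimensional Heisenberg factors, invokes Li's sharp gradient bound \cite{LiHong-Quan2006} on each factor, and then tensorizes the resulting LSIs --- the LSI constant being stable under products --- to get the same absolute constant on every $G_P$. You do mention ``a tensorization reduction'' as an alternative in your last sentence; that alternative is in fact the argument, and promoting it from afterthought to main line would close your proof.
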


\begin{remark}
The LSI constant $C\left(\omega,t\right)$ on $G$ can be chosen to be the same as the one on the three-dimensional isotropic Heisenberg group as pointed out in \cite[Theorem 2.8]{GordinaLuo2022} or \cite[Corollaire 1.2]{LiHong-Quan2006}.
\end{remark}

Now we proceed to prove the hypoelliptic logarithmic Sobolev inequality on $\widetilde{G}$. 

\begin{theorem} \label{thm.LSI.Reduced}
The logarithmic Sobolev inequality holds on $\widetilde{G}$, that is, for any $f\in \mathcal{D}\left(\mathcal{E}_{\widetilde{G}}\right)$ and any $t>0$
\begin{align} \label{LSIInfinite.Reduced}
\int_{\widetilde{G}}f^2\log f^2d\mu^{\widetilde{G}}_t-\left(\int_{\widetilde{G}}f^2 d\mu^{\widetilde{G}}_t\right)\log\left(\int_{\widetilde{G}}f^2d\mu^{\widetilde{G}}_t\right)
\leqslant C\left(\widetilde{G}, \mathcal{D}\left(\mathcal{E}_{\widetilde{G}}\right)\right) \mathcal{E}_{\widetilde{G}}\left(f,f\right),
\end{align}
where the LSI constant can be chosen to be 
\begin{align}
C\left(\widetilde{G}, \mathcal{D}\left(\mathcal{E}_{\widetilde{G}}\right)\right) =C\left(\omega,t\right)=Ct
\end{align} 
with some $C>0$ independent of $\omega$ and thus the LSI constant is independent of $\omega$.
\end{theorem}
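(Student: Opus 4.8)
The plan is to exhibit the quotient map $\varphi\colon G\to\widetilde{G}$ as a quasi-homeomorphism between the quasi-regular Dirichlet spaces $(G,\mu_t^G,\mathcal{E}_G,\mathcal{D}(\mathcal{E}_G))$ and $(\widetilde{G},\mu_t^{\widetilde{G}},\mathcal{E}_{\widetilde{G}},\mathcal{D}(\mathcal{E}_{\widetilde{G}}))$, and then to apply Theorem~\ref{thm.LSI.Quasi-homeomorphism} with $E_1=G$ and $E_2=\widetilde{G}$. Since Theorem~\ref{thm.LSIInfinite.Reduced} gives $LSI_{C(\omega,t)}$ on $G$ with $C(\omega,t)=Ct$ and $C$ independent of $\omega$, Theorem~\ref{thm.LSI.Quasi-homeomorphism} then produces \eqref{LSIInfinite.Reduced} on $\widetilde{G}$ with $C(\widetilde{G},\mathcal{D}(\mathcal{E}_{\widetilde{G}}))=C(\omega,t)=Ct$, and the $\omega$-independence is inherited verbatim.

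To check that $\varphi$ is a quasi-homeomorphism I would verify the three defining conditions separately. Condition (1), the identity $\mu_t^{\widetilde{G}}=\varphi_{\#}\mu_t^G$, is precisely Theorem~\ref{thm.HeatKernelMeasureCharacterization}. Condition (3), namely that $f\circ\varphi\in\mathcal{D}(\mathcal{E}_G)$ for $f\in\mathcal{D}(\mathcal{E}_{\widetilde{G}})$ together with $\mathcal{E}_{\widetilde{G}}(f_1,f_2)=\mathcal{E}_G(f_1\circ\varphi,f_2\circ\varphi)$, is exactly Theorem~\ref{thm.DirichletFormRelation} (see \eqref{eqn.DirichletFormRelationII}). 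It remains to address condition (2), which has two parts: first, that $\mathcal{E}_{\widetilde{G}}$ is quasi-regular (the quasi-regularity of $\mathcal{E}_G$ being available from the construction on $G$, cf.\ \cite{GordinaLuo2022}); second, that one can choose increasing sequences of closed sets $\{F_1^k\}$ in $G$ and $\{F_2^k\}$ in $\widetilde{G}$, each an $\mathcal{E}$-nest, with $\varphi|_{F_1^k}$ a homeomorphism onto $F_2^k$. For the latter one uses that $\varphi$ is a covering map of $W\times\mathbb{R}$ onto $W\times\mathbb{T}^1$, hence a local homeomorphism, so each $F_2^k$ can be taken small enough in the central $\mathbb{T}^1$-direction for $\varphi$ to trivialize over it, after which $F_1^k$ is selected as the corresponding sheet; the exhaustion and density requirements are then handled using the finite-dimensional projection groups $\widetilde{G}_P\cong PW\times\mathbb{T}^1$ and the commuting diagram \eqref{ProjectionDiagram}.

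I expect this verification of condition (2) — the construction of compatible $\mathcal{E}$-nests and the quasi-regularity of $\mathcal{E}_{\widetilde{G}}$ — to be the main obstacle, since the natural nests on $G=W\times\mathbb{R}$ are not injective under $\varphi$ and one must reconcile the trivialization of the covering with the exhaustion/density condition, all in an infinite-dimensional setting lacking a Haar reference measure. It is, however, worth emphasizing that the inequality \eqref{LSIInfinite.Reduced} together with its constant uses only conditions (1) and (3): given $f\in\mathcal{D}(\mathcal{E}_{\widetilde{G}})$, Theorem~\ref{thm.DirichletFormRelation} gives $f\circ\varphi\in\mathcal{D}(\mathcal{E}_G)$; applying \eqref{LSIInfinite} to $f\circ\varphi$, then changing variables via $\mu_t^{\widetilde{G}}=\varphi_{\#}\mu_t^G$ (Theorem~\ref{thm.HeatKernelMeasureCharacterization}) in the left-hand side and using $\mathcal{E}_G(f\circ\varphi,f\circ\varphi)=\mathcal{E}_{\widetilde{G}}(f,f)$ on the right-hand side yields \eqref{LSIInfinite.Reduced} with $C(\widetilde{G},\mathcal{D}(\mathcal{E}_{\widetilde{G}}))=C(\omega,t)=Ct$. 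Thus the proof can be organized either as an application of Theorem~\ref{thm.LSI.Quasi-homeomorphism} once $\varphi$ has been identified as a quasi-homeomorphism, or, more directly, by inlining this short change-of-variables computation.
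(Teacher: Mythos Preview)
Your proposal is correct and matches the paper's approach: the paper notes that the result follows from Theorem~\ref{thm.LSI.Quasi-homeomorphism}, and then gives the direct change-of-variables argument you outline in your final paragraph (apply \eqref{LSIInfinite} to $f\circ\varphi$, push forward via Theorem~\ref{thm.HeatKernelMeasureCharacterization}, and use \eqref{eqn.DirichletFormRelationII}). The only cosmetic difference is that the paper first reduces to $f\in\mathcal{FC}^{\infty}_c(\widetilde{G})$ by a standard density/limiting argument and then invokes Lemma~\ref{lem.DomainRelation}, whereas you work directly with $f\in\mathcal{D}(\mathcal{E}_{\widetilde{G}})$ via Theorem~\ref{thm.DirichletFormRelation}; both are fine, and your observation that only conditions (1) and (3) are actually used is exactly the point --- the paper does not verify the nest condition (2) either.
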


\begin{proof} Note that this theorem follows from a general statement in Theorem~\ref{thm.LSI.Quasi-homeomorphism}. In addition, we provide a direct proof of this theorem as follows. By \cite[p. 237]{BakryGentilLedouxBook} or by a standard  limiting argument similarly to  \cite[Example 2.7]{Gross1993b}, it suffices to show that the logarithmic Sobolev inequality holds for any $f\in \mathcal{FC}^{\infty}_c(\widetilde{G})$. For any $f\in \mathcal{FC}^{\infty}_c(\widetilde{G})$, Lemma~\ref{lem.DomainRelation} implies that $f\circ \varphi \in \mathcal{D}\left(\mathcal{E}_{G}\right)$. As in Theorem~\ref{thm.LSIInfinite.Reduced} (see \cite[Theorem 6.24]{GordinaLuo2022}), the logarithmic Sobolev inequality holds on $G$. Using the change of variable formula (see \cite[Theorem 3.6.1]{BogachevBook2007}) for the quotient map $\varphi(g)\longmapsto \widetilde{g}\in \widetilde{G}$ in the following logarithmic Sobolev inequality for $f\circ \varphi$
\begin{align*} 
& \int_{G}(f\circ \varphi)^2\log (f\circ \varphi)^2d\mu_t^{G}-\left(\int_{G}(f\circ \varphi)^2 d\mu_t^{G}\right)\log\left(\int_{G}(f\circ \varphi)^2d\mu_t^{G}\right) 
\\
&
\leqslant C\left(\omega,t\right)\mathcal{E}_{G}(f\circ \varphi), 
\end{align*}
we obtain that 
\begin{align} \label{ineq.LSI.Reduced}
\int_{\widetilde{G}}f^2\log f^2d\mu_t^{\widetilde{G}}-\left(\int_{\widetilde{G}}f^2 d\mu_t^{\widetilde{G}}\right)\log\left(\int_{\widetilde{G}}f^2d\mu_t^{\widetilde{G}}\right) \leqslant C\left(\omega,t\right) \mathcal{E}_{\widetilde{G}}(f,f). 
\end{align}
Moreover, we can choose
\begin{align*}
C\left(\widetilde{G}, \mu_t^{\widetilde{G}}\right)=C\left(\omega,t\right)=Ct
\end{align*}
as we can see in \eqref{ineq.LSI.Reduced}, which is independent of $\omega$.   
\end{proof}

\begin{remark}
From the proof of Theorem~\ref{thm.LSI.Reduced}, we observe that the LSI constant $C\left(\widetilde{G}, \mu_t^{\widetilde{G}}\right)$ on $\widetilde{G}$ can be chosen to be the one on the three-dimensional isotropic Heisenberg group in \cite[Theorem 2.8]{GordinaLuo2022} or \cite[Corollaire 1.2]{LiHong-Quan2006}. In this way, the LSI constant on $\widetilde{G}$ can at least be controlled by the LSI constant on $G$. Note that the optimal constant in the logarithmic Sobolev inequality with respect to the hypoelliptic heat kernel measure on the three-dimensional isotropic Heisenberg group is not known, even though a recent preprint \cite{AntonelliCalziGordina2025} proved sharp defective logarithmic Sobolev inequalities on $H$-type groups. 
\end{remark}

\bibliographystyle{amsplain}
\providecommand{\bysame}{\leavevmode\hbox to3em{\hrulefill}\thinspace}
\providecommand{\MR}{\relax\ifhmode\unskip\space\fi MR }
\providecommand{\MRhref}[2]{%
  \href{http://www.ams.org/mathscinet-getitem?mr=#1}{#2}
}
\providecommand{\href}[2]{#2}

\end{document}